\newtheoremstyle{montheoreme}% name
  {}%				Space above
  {}% 			Space below
  {\itshape}%		Body font
  {}%				Indent amount (empty = no indent, \parindent = para indent)
  {\bf}%			Thm head font
  {.}%			Punctuation after thm head
  {.5em}%			Space after thm head: " " = normal interword space;
\newtheoremstyle{maremarque}% name
  {}%				Space above
  {}% 			Space below
  {}%				Body font
  {}%				Indent amount (empty = no indent, \parindent = para indent)
  {\bf}%			Thm head font
  {.}%			Punctuation after thm head
  {.5em}%			Space after thm head: " " = normal interword space;
\theoremstyle{montheoreme}
\newtheorem{thm}{Theorem}[section]
\newtheorem{defn}[thm]{Definition}
\newtheorem{prop}[thm]{Proposition}
\newtheorem{lem}[thm]{Lemma}
\newtheorem{cor}[thm]{Corollary}
\theoremstyle{maremarque}
\newtheorem*{rmq}{Remark}{}
\DeclareMathOperator{\Ent}{Ent}
\newcommand{\T}{\mathbb{T}}
\begin{document}
%-------------------------------------PAGE DE TITRE------------------------------
\title{A two-scale approach to the hydrodynamic limit \newline Part II: local Gibbs behavior}

\vspace{5mm}

\author{Max Fathi \thanks{LPMA, University Paris 6, France, mfathi@clipper.ens.fr.} }
\date{\today}

\maketitle

%----------------------------------------------------RESUME ET REMERCIEMENTS---------------------

\begin{abstract}
This work is a follow-up on [GOVW]. In that previous work a two-scale approach was used to prove the logarithmic Sobolev inequality for a system of spins with fixed mean whose potential is a bounded perturbation of a Gaussian, and to derive an abstract theorem for the convergence to the hydrodynamic limit. This strategy was then successfully applied to Kawasaki dynamics. Here we shall use again this two-scale approach to show that the microscopic variable in such a model behaves according to a local Gibbs state. As a consequence, we shall prove the convergence of the microscopic entropy to the hydrodynamic entropy.

\end{abstract}
\tableofcontents
\newpage

%------------------------------------------------------TEXTE----------------------------------------------------------------------------------------------------------------------
\large \textbf{Introduction} 

\vspace{0.2cm}

A local Gibbs measure is a vague term used to designate a measure whose density (with respect to the plain Gibbs measure) takes the form $G(x) = \exp(\sum \lambda_ix_i)$, where the coefficients $\lambda_i$ vary ``at macroscopic scale". They have been used by Guo, Papanicolaou and Varadhan in [GPV] for the Ginzburg-Landau model, and also play a crucial role in the relative entropy method devised by Yau in [Y]. They represent in some sense a ``typical" microscopic distribution having the correct hydrodynamic profile. The main result in [Y] can be informally summarized as follows: if the initial datum is in local Gibbs state, then at later times the microscopic variable is very close (in the sense of Kullback information) to be in local Gibbs state too. The local Gibbs state Yau used is defined in terms of the hydrodynamic equation, and chosen so that it is close to being a solution of the microscopic equation.

\vspace{0.1cm}

\hspace{0.3cm} In a more recent contribution, Kosygina [K] proved that the solution of the Ginzburg--Landau model behaves like a local Gibbs state for all positive times even if it does not at initial time. That is, there is a time-dependent family of vectors $\lambda(t)$ such that, if $f(t,\cdot)$ is the density at time $t$ with respect to the equilibrium measure $\mu$ of a system of N continuous spins $x_i$ interacting according to Kawasaki dynamics, then the relative entropy of $f\mu$ with respect to the measure $\nu(dx) = \frac{1}{Z}\exp(\lambda \cdot x)\mu(dx)$, given by
$$Ent_{\nu}(f\mu) = \int{\rho \log \rho \hspace{1mm}},$$
with $\rho$ being the density of $f \mu$ with respect to $\nu$, goes to $0$ when $N$ goes to infinity for any time $t > 0.$ The equilibrium measure $\mu(dx) := \exp(-H(x))dx$ is assumed to have a Ginzburg--Landau type potential, that is
$$H(x) = \sum \psi(x_i),$$
where $\psi$ is the single-site potential. Kosygina's proof relied on the logarithmic Sobolev inequality, and she used an assumption of uniform convexity of the Ginzburg--Landau potential $\psi$. In the present work we shall generalize these results to cover a certain class of nonconvex potentials. At the same time we shall point out the role of another information-theoretical inequality, the so-called HWI inequality introduced by Otto and Villani in [OV]. This inequality will allow us to pass from a convergence in a Wasserstein distance sense to a convergence in relative entropy. To be used efficiently in this setting, the HWI inequality needs a log-concave reference measure, which is not the case for the microscopic equilibrium measure when $\psi$ is not convex. This is why we shall use, like in [GOVW], the convexification induced by the macroscopic block decomposition. Since we will then need our local Gibbs state to be compatible with the passage to macroscopic scale, we will use a local Gibbs state slightly different from the one used in [Y] (the value of $\lambda \cdot x$ must only depend on the macroscopic profile associated with $x$), but such that when $N$ goes to infinity, the relative entropy with respect to either measure behaves in the same way.

\hspace{0.3cm} As in [K], we shall also prove the (physically relevant) convergence of the microscopic entropy to the macroscopic (hydrodynamic) entropy, that is 
$$\frac{1}{N}\int{f(t,x)\log f(t,x) \mu(dx)} \rightarrow \int_{\mathbb{T}}{\varphi(\zeta(t,\theta)) d\theta} - \varphi\left(\int_{\mathbb{T}}{\zeta(t,\theta) d\theta}\right),$$
where $\zeta$ is the hydrodynamic limit, and $\varphi$ is the Cram\'er transform of the potential. However, we shall deduce it from the local Gibbs behavior, while Kosygina does it the other way round.

\hspace{0.3cm} Our two-scale approach will only yield convergences in $\textsl{L}^1(dt)$. However, by using a method of [K] in conjunction with these results, we will be able to immediately prove that the convergence is uniform in time, as long as we stay away from time $t = 0$.

\hspace{0.3cm} It should be noted that, while Kosygina used an assumption of convexity, it was mainly required to ensure that the equilibrium measures satisfies a logarithmic Sobolev inequality. In light of the recent work [MO], it seems that her method can be adapted to the class of nonconvex potentials covered here. On the other hand, our method does not cover potentials with superquadratic growth. This restriction is inherited from the results in [GOVW]. If the two scale approach could be extended to cover superquadratic potentials, our method could also be extended, with minor technical modifications. However, even with the logarithmic Sobolev inequality obtained in [MO], extending the section of [GOVW] that concerns the hydrodynamic limit to superquadratic potentials is nontrivial, and remains to be done.

\hspace{0.3cm} The plan of this paper is as follows : in Section 1, we will recall the framework and results of [GOVW] which will be used in this article and present our main results, in both the abstract framework and their application for Kawasaki dynamics. Sections 2 and 3 will then give the details of the proofs.
\vspace{0.5cm}

\large \textbf{Notation}

\vspace{0.2cm}
- $\nabla$ stands for the gradient, $\operatorname{Hess}$ for Hessian, $|\cdot |$ for norm and $\langle \cdot, \cdot \rangle$ for inner product. Whenever necessary, the space to which these are associated will be indicated with a subscript.

- $A^t$ is the transpose of the operator $A$.

- Ran($A$) is the range of the operator $A$.

- $\Phi(x) = x\log x$.

- $\operatorname{Ent}_{\mu}(f) := \int{\Phi(f)d\mu} - \Phi(\int{f d\mu})$ is the (negative of the) entropy of the positive function $f$ with respect to the probability measure $\mu$. 

- C is a positive constant, which may change from line to line, or even within a line.

- Z is a positive constant enforcing unit mass of a given probability measure.

- $id_E$ is the identity map $E \rightarrow E$

- LSI is an abbreviation for Logarithmic Sobolev Inequality.

- $\Gamma(Y, |\cdot|_Y) := \int{\exp(-|y|_Y^2/2)dy}$ is the Gaussian integral on the space $Y$ with respect to the norm $|\cdot|_Y$.

- $C^{1,2}(A \times B)$ is the space of real-valued functions on $A \times B$ which are $C^1$ with respect to the first variable and $C^2$ with respect to the second variable.

- $W_2(\mu, \nu)$ is the Wasserstein distance between two probability measures $\mu$ and $\nu$ with finite second moment. It is defined through the formula $W_2(\mu, \nu)^2 := \underset{\pi \in \Pi}{\inf} \int{|x - y|^2\pi(dx, dy)}$, where $\Pi$ is the set of all coupling of $\mu$ and $\nu$;

- $I_{\mu}(\nu)$ is the Fisher information of the probability measure $\nu$ with respect to $\mu$, given by $\int{\frac{|\nabla f|^2}{f}d\mu}$ if $\nu = f\mu$ for some density $f$, and $+\infty$ if not.

\vspace{1cm}

\section{Background and Main Results}

The aim of this section is to recall the setting and the main results of [GOVW], as well as to present the new results brought by the present paper.

\subsection{Logarithmic Sobolev inequalities}

Throughout this work, $X$ and $Y$ are two Euclidean spaces. 
It is convenient to think of $X$ as the space of microscopic variables, 
and $Y$ as the space of macroscopic variables. 
We consider a linear operator $P : X \rightarrow Y$, that associates to the microscopic profile $x$ the corresponding macroscopic profile $y = Px$. 
We shall assume that there is an integer $N \in \mathbb{N}$, which measures the size of the microscopic system, such that
\begin{equation} 
\label{2} P N P^t = id_Y. 
\end{equation}
We shall keep the same framework as in [GOVW], by considering a measure $\mu(dx) = \exp(-H(x))\hspace{0.1cm}dx$ on $X$, and its decomposition, as induced by the operator $P$. The measure $\bar{\mu} = P_{\#}\mu$ is the distribution of the macroscopic profile, and for all $y \in Y$, $\mu(dx|y)$ is the conditional distribution of $x$ given $Px = y$. This decomposition induces a natural coarse-graining of the microscopic Hamiltonian $H$, defined by $\bar{H}(y) := -\frac{1}{N}\log\left(\frac{d\bar{\mu}}{dy}\right)$, so that 
$$\bar{\mu}(dy) = \exp(-N\bar{H}(y))\hspace{0.1cm}dy.$$

One of the tools frequently used to study particle systems is a logarithmic Sobolev inequality. Let us first recall the definition: 

\begin{defn} Let $X$ be a Riemannian manifold. A probability measure $\mu$ on $X$ is said to satisfy a LSI with constant $\rho > 0$ if, for any locally Lipschitz, nonnegative function $f \in \textsl{L}^1(\mu)$,
$$\int{f \log (f) d\mu} - \left(\int{f d\mu}\right)\log \left(\int{f d\mu}\right) \leq \frac{1}{\rho}\int{\frac{|\nabla f|^2}{2f}d\mu}.$$
\end{defn}

There are many criterions and applications for LSI in the literature. [L] contains a nice introduction to the topic. One of the main results of [GOVW] is the following sufficient condition for LSI, based on the two-scale decomposition of $\mu$.

\begin{thm} [Two-scale LSI]

\noindent Let $\mu(dx) =$ $\exp(-H(x))dx$ be a probability measure on $X$, and let $P : X \rightarrow Y$ satisfy (\ref{2}). Assume that

(i) $\kappa :=$ 
\begin{equation} \label{5}
\underset{x \in X}{\max} \left\{\left\langle \operatorname{Hess} H(x) \cdot u, v\right\rangle, 
u \in \operatorname{Ran}\left(NP^tP\right), v \in \operatorname{Ran}\left(id_X - NP^tP\right), |u| = |v| =1\right\} 
\end{equation}
is finite;

(ii) There is $\rho > 0$ such that $\mu(dx|y)$ satisfies LSI($\rho$) for all $y$;

(iii) There is $\lambda > 0$ such that $\bar{\mu}$ satisfies LSI($\lambda N$).

Then $\mu$ satisfies LSI($\hat{\rho}$), with 
\begin{equation} \label{7}
\hat{\rho} := \frac{1}{2}\left(\rho + \lambda + \frac{\kappa^2}{\rho} - \sqrt{(\rho + \lambda +  \frac{\kappa^2}{\rho})^2 - 4\rho \lambda}\right) > 0.
\end{equation}

\end{thm}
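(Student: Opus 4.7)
The plan is to run the two-scale decomposition of entropy, control each of the two resulting pieces with the corresponding LSI hypothesis, and then reconcile the marginal Fisher information with the full Fisher information on $X$ through a commutator estimate in which the Hessian bound (\ref{5}) enters. A final Young's inequality optimization produces the constant (\ref{7}).

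The starting point is the additivity of entropy along the conditioning $x\mapsto Px = y$,
$$\operatorname{Ent}_\mu(f) = \int_Y \operatorname{Ent}_{\mu(dx|y)}(f)\,\bar\mu(dy) + \operatorname{Ent}_{\bar\mu}(\bar f),\qquad \bar f(y) := \int f(x)\,\mu(dx|y).$$
Since (\ref{2}) makes $NP^tP$ a symmetric idempotent, hence an orthogonal projection whose complementary subspace $\operatorname{Ran}(id_X - NP^tP)$ coincides with $\ker P$, the gradient along fibers in the first term is the vertical component $(id_X-NP^tP)\nabla f$. Applying (ii) to this piece and (iii) to the marginal one yields
$$\operatorname{Ent}_\mu(f) \le \frac{1}{2\rho}\int_X\frac{|(id_X-NP^tP)\nabla f|^2}{f}\,d\mu + \frac{1}{2\lambda N}\int_Y\frac{|\nabla\bar f|_Y^2}{\bar f}\,d\bar\mu.$$

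The heart of the argument is a pointwise commutator estimate bounding $|\nabla_y\bar f|_Y^2/\bar f$ by the horizontal Fisher information of $f$, up to an error driven by the vertical one. Differentiating $\bar f$ in $y$ under the integral sign, which requires making the dependence of $\mu(dx|y)$ on $y$ explicit (typically by straightening the fibers of $P$ through a smooth family of diffeomorphisms), expresses $\nabla_y\bar f(y)$ as a conditional expectation involving $P\nabla f$ plus a covariance-type error term. That error is controlled by the off-diagonal block of $\operatorname{Hess} H$ between $\operatorname{Ran}(NP^tP)$ and $\operatorname{Ran}(id_X - NP^tP)$, whose operator norm is exactly $\kappa$ by (\ref{5}). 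A Cauchy--Schwarz on the main term and Young's inequality $2ab \le \varepsilon a^2 + \varepsilon^{-1}b^2$ on the error, combined with a second use of (ii) to absorb the vertical Fisher information produced by the error, yields for every $\varepsilon > 0$ an upper bound of the form
$$\operatorname{Ent}_\mu(f) \le \frac{\alpha(\varepsilon)}{2}\int\frac{|NP^tP\nabla f|^2}{f}\,d\mu + \frac{\beta(\varepsilon)}{2}\int\frac{|(id_X-NP^tP)\nabla f|^2}{f}\,d\mu,$$
where $\alpha(\varepsilon)$ grows from $1/\lambda$ and $\beta(\varepsilon)$ combines $1/\rho$ with a term of order $\kappa^2/(\varepsilon\rho\lambda)$, the precise coefficients being dictated by the commutator lemma.

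Using the orthogonal decomposition $|\nabla f|^2 = |NP^tP\nabla f|^2 + |(id_X - NP^tP)\nabla f|^2$, choosing $\varepsilon$ so that $\alpha(\varepsilon) = \beta(\varepsilon) = 1/\hat\rho$ yields LSI($\hat\rho$) for $\mu$; the balancing reduces to the quadratic $\hat\rho^2 - (\rho+\lambda+\kappa^2/\rho)\hat\rho + \rho\lambda = 0$, whose smaller root is exactly (\ref{7}) and is positive since the product of the two roots equals $\rho\lambda > 0$. The principal obstacle is the commutator estimate for $\nabla\bar f$: the geometric setup needed to differentiate an integral over a $y$-dependent fiber, and the identification of (\ref{5}) as the correct quantity bounding the error, is where the substantive work lies. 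Once that lemma is in place, the surrounding steps — two applications of (ii), one of (iii), Cauchy--Schwarz, Young's inequality, and the elementary optimization in $\varepsilon$ — are standard.
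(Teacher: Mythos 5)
Your proposal follows essentially the same route as the proof of this theorem in [GOVW] (the present paper only cites the result and does not reprove it): additivity of the entropy along the conditioning, the two LSI hypotheses applied to the fiber and marginal pieces, the covariance/commutator estimate for $\nabla_Y\bar f$ in which $\kappa$ and a second application of hypothesis (ii) enter --- this is precisely [GOVW, Proposition 20], quoted verbatim in Section 2 of this paper --- and the optimization over the splitting parameter, whose balancing quadratic $\hat\rho^{\,2}-(\rho+\lambda+\kappa^2/\rho)\hat\rho+\rho\lambda=0$ you have identified correctly. The only piece you have not actually carried out is the proof of that commutator lemma (the differentiation of $\bar f$ through the $y$-dependent fibers and the covariance bound via $\kappa$ and the conditional LSI), which you rightly single out as the substantive step; everything surrounding it is complete and correct.
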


\subsection{Hydrodynamic limits}
Let us now recall the setting of the abstract criterion for hydrodynamic limits proved in [GOVW]. 
We endow the space $X$ with a Gibbs probability measure $\mu$, and we consider a positive definite symmetric linear operator $A : X \rightarrow X$. 
The stochastic dynamics on $X$ that is studied is described by the equation 
\begin{equation} \label{8}
\frac{\partial}{\partial t}(f\mu) = \nabla \cdot \left(A \nabla f \mu\right).
\end{equation}
This equation is to be understood in a weak sense. That is, for all smooth test function $\xi$, we have $\frac{d}{dt}\int{\xi(x)f(t,x)\mu(dx)} = -\int{\nabla \xi(x) \cdot A\nabla f(t,x)\mu(dx)}$. 
Given an initial condition $f(0, \cdot)$ such that $f(0,x)\mu(dx)$ is a probability measure, 
the solution $f(t,x)$ is at all times the microscopic density of a probability measure with respect to $\mu$. 

The aforementioned abstract criterion states that, under certain conditions, 
and in a precise sense, the macroscopic profile $y = Px$, with law given by $\bar{f}(t,y) = \int_{\{Px = y\}}{f(t,x)\mu(dx)}$ is close to the solution of the following differential equation : 
\begin{equation} \label{9}
\frac{d\eta}{dt} = -\bar{A}\nabla \bar{H}(\eta(t))
\end{equation}
where $\bar{A}$ is the symmetric, positive definite operator on $Y$ defined by 
\begin{equation} 
\bar{A}^{-1} = PA^{-1}NP^t.
\end{equation}

We can now recall the abstract theorem proved in [GOVW] :

\begin{thm} \label{10}
Let $\mu(dx) = \exp(-H(x))dx$ be a probability measure on $X$, and let $P : X \rightarrow Y$ satisfy (\ref{2}). We define $M := \operatorname{dim} Y + 1$. Let $A : X \rightarrow X$ be a symmetric, definite positive operator, and $f(t,x)$ and $\eta(t)$ be the solutions of (\ref{8}) and (\ref{9}), with initial data $f(0,\cdot)$ and $\eta_0$ respectively. Assume that :

(i) $\kappa$ as defined by (\ref{5}) is finite;

(ii) There is $\rho > 0$ such that $\mu(dx|y)$ satisfies LSI($\rho$) for all $y$;

(iii) There is $\lambda > 0$ such that $\langle \operatorname{Hess} \bar{H} (y)\tilde{y}, \tilde{y} \rangle_Y \geq \lambda \langle \tilde{y}, \tilde{y} \rangle_Y$ for all $y$, $\tilde{y} \in Y$;

(iv) There is $\alpha > 0$ such that $\int_X{|x|^2\mu(dx)} \leq \alpha N$;

(v) There is $\beta > 0$ such that $\underset{y \in Y}{\inf} \bar{H}(y) \geq -\beta$;

(vi) There is $\gamma > 0$ such that for all $x \in X$, $$|(id_X - NP^tP)x|^2 \leq \gamma M^{-2} \langle x, Ax \rangle_X;$$

(vii) There are constants $C_1$ and $C_2$ such that the initial datum satisfy $$\int{f(0,x)\log f(0,x) \mu(dx)} \leq C_1 N \hspace{0.3cm}  \text{and}  \hspace{0,3cm}  \bar{H}(\eta_0) \leq C_2.$$

\vspace{0.2cm}

Define $$\Theta(t) := \frac{1}{2N}\int{\left\langle (x - NP^t\eta(t)), A^{-1}(x - NP^t\eta(t))\right\rangle f(t,x)\mu(dx)}.$$

Then for any $T > 0$, we have, with $\hat{\rho}$ given by (\ref{7}), 

$\max \left\{\underset{0 \leq t \leq T}{\sup} \Theta(t), \frac{\lambda}{2}\int_0^T{\left(\int_Y{|y-\eta(t)|_Y^2\bar{f}(t,y)\bar{\mu}(dy)}\right)dt}\right\}$

$\leq \Theta(0) + T\left(\frac{M}{N}\right) + \frac{1}{M^2}\left(\frac{C_1\gamma \kappa^2}{2\lambda \rho^2}\right) + \frac{1}{M}\left[\sqrt{2\gamma T}\left(\alpha + \frac{2C_1}{\hat{\rho}}\right)^{1/2}(\sqrt{C_1} + \sqrt{C_2 + \beta})\right]$

$=: \Xi(T,M,N).$
\end{thm}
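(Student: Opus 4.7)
The plan is to follow the gradient-flow strategy of [GOVW]: view $\Theta(t)$ as a proxy for a Wasserstein-type distance between the microscopic law $f(t,\cdot)\mu$ and a measure concentrated on the lifted trajectory $NP^t\eta(t)$, differentiate it in time along the coupled flow \eqref{8} and \eqref{9}, and compare the two evolutions. The dominant term will be coercive in $|Px-\eta|^2$ and will yield the integrated estimate on $\int_0^T\!\int|y-\eta|^2\bar f\bar\mu\,dt$; everything else must be absorbed into $\Xi$.

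First I would compute $\frac{d\Theta}{dt}$. With $\xi(t,x):=\frac{1}{2N}\langle x-NP^t\eta(t),A^{-1}(x-NP^t\eta(t))\rangle$ one has $\nabla_x\xi=\tfrac{1}{N}A^{-1}(x-NP^t\eta)$ and $\partial_t\xi=-\langle A^{-1}(x-NP^t\eta),P^t\dot\eta\rangle$. Using the weak form of \eqref{8}, integrating by parts against $\mu$, and substituting the ODE $\dot\eta=-\bar A\nabla\bar H(\eta)$ together with $\bar A^{-1}=PA^{-1}NP^t$, I expect an expression of the form
\begin{equation*}
\frac{d\Theta}{dt} \leq \frac{\dim X}{N} - \int\!\bigl\langle Px-\eta,\,\nabla\bar H(Px)-\nabla\bar H(\eta)\bigr\rangle\, f\mu + \text{(microscopic errors)}.
\end{equation*}
By hypothesis (iii), the middle term is bounded above by $-\lambda\int|Px-\eta|^2 f\mu=-\lambda\int|y-\eta|^2\bar f\bar\mu$, which is the coercive contribution responsible for the $W_2$-type estimate.

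Next I would control the microscopic errors, which arise because $P\nabla H(x)$ differs from $\nabla\bar H(Px)$: these agree only after averaging over the fiber $\{Px=y\}$. The cross-Hessian bound (i) shows this fluctuation has norm controlled by $\kappa\,|(id_X-NP^tP)x|$, and hypothesis (vi) converts the geometric factor into $\sqrt{\gamma}\,M^{-1}\langle x,Ax\rangle^{1/2}$. A Cauchy--Schwarz decomposition then produces a time-integrated Dirichlet form $\int_0^T\!\int\langle\nabla f,A\nabla f\rangle/f\,d\mu\,dt$, bounded via entropy dissipation along \eqref{8} by $\operatorname{Ent}_\mu(f(0,\cdot)\mu)\leq C_1 N$ (hypothesis (vii)). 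The conditional LSI (ii) converts this Fisher information on each fiber into $L^2$ control of the fluctuation and yields the $M^{-2}C_1\gamma\kappa^2/(2\lambda\rho^2)$ term. The mixed term pairing the microscopic error with $|x-NP^t\eta|$ is handled similarly, the norm of $x-NP^t\eta$ being controlled using (iv) for $\int|x|^2 f\mu$ and (v) together with the $\lambda$-convexity (iii) and (vii) for $|\eta(t)|$ via $\bar H(\eta(t))\leq\bar H(\eta_0)\leq C_2$; this furnishes the $M^{-1}$ term involving $\sqrt{2\gamma T}$, $\alpha$, $C_1$, $C_2$, $\beta$.

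The main obstacle is managing the microscopic fluctuation of $\nabla H$ around its conditional mean: one has to route it through the conditional LSI (ii) while keeping the correct powers of $M$ and $N$ so that only the prefactors $M^{-2}$ and $M^{-1}$ survive in the final bound. With this in hand, integrating the resulting differential inequality on $[0,T]$ and using the coercive term to control $\sup_{[0,T]}\Theta$ and $\int_0^T\!\int|y-\eta|^2\bar f\bar\mu\,dt$ simultaneously yields the announced estimate by $\Xi(T,M,N)$.
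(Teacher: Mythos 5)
First, a point of order: this paper does not prove Theorem \ref{10} at all --- it is explicitly recalled from [GOVW], so there is no internal proof to compare your argument against. Your sketch does reproduce the [GOVW] strategy: differentiate $\Theta$ along the coupled flows, extract a coercive term from the convexity hypothesis (iii) on $\bar H$, and absorb the microscopic fluctuations through (i), (ii), (vi) and the entropy dissipation. At that level of description the plan is the right one.

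There is, however, a concrete gap in the differential inequality as you wrote it. Integrating $\nabla_x\xi\cdot A\nabla f$ by parts against $\mu=e^{-H}dx$ produces $\frac{1}{N}\int f\,\bigl(\dim X-\langle x-NP^t\eta,\nabla H\rangle\bigr)d\mu$, and you have kept the raw $\dim X/N$ term. In the intended application $\dim X=N-1$, so $\dim X/N=O(1)$ and its time integral does not vanish: your inequality as stated cannot produce the $T(M/N)$ term of $\Xi$, and the whole bound collapses. The cancellation of the microscopic dimension is precisely where the fluctuation decomposition you mention must be used again: splitting $\nabla H$ into its component along the fibers $\{Px=y\}$ and its macroscopic component, the fiber part of $\langle x-NP^t\eta,\nabla H\rangle$, integrated by parts along each fiber against $\mu(dx|y)$, returns $\dim X-\dim Y$ plus a fiber-gradient term; only $\dim Y/N\le M/N$ survives, and the leftover fiber-gradient term is the one you then control via $\kappa$, (vi) and the Dirichlet form. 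A second, smaller omission: hypothesis (iv) bounds $\int|x|^2\,d\mu$, not $\int|x|^2 f\,d\mu$, and your sketch invokes ``(iv) for $\int|x|^2f\mu$'' directly. Transferring the second-moment bound from $\mu$ to $f\mu$ costs an entropy term and is exactly where the two-scale LSI constant $\hat{\rho}$ of (\ref{7}) enters the final estimate through the factor $\bigl(\alpha+2C_1/\hat{\rho}\bigr)^{1/2}$; as written, your argument never explains why $\hat{\rho}$ appears in $\Xi$ at all.
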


This theorem means that, if we consider a sequence of data 

$\{X_{\ell}, Y_{\ell}, N_{\ell}, P_{\ell}, A_{\ell}, \mu_l, f_{0,\ell}, \eta_{0,\ell}\}_{\ell \in \mathbb{N}}$ that satisfies the previous assumptions with uniform constants, 
and if we assume that
\begin{equation} M_{\ell} \uparrow \infty; \hspace{0.5cm} N_{\ell} \uparrow \infty; \hspace{0.5cm} \frac{N_{\ell}}{M_{\ell}} \uparrow \infty
\end{equation}
and that the initial data $\Theta_{\ell}(0)$ goes to $0$, then for all $T > 0$ we have
\begin{equation} \label{eq_convergence}
\underset{\ell \uparrow \infty}{\lim} \hspace{0.1cm} \underset{0 \leq t \leq T}{\sup} \frac{1}{N_{\ell}}\int{(x-N_{\ell}P_{\ell}^t\eta_{\ell}(t))\cdot A_{\ell}^{-1}(x-N_{\ell}P_{\ell}^t\eta_{\ell}(t))f_{\ell}(t,x)\mu_{\ell}(dx)}= 0
\end{equation}
and
$$\underset{\ell \uparrow \infty}{\lim} \int_0^T{\int_Y{|y-\eta_{\ell}(t)|_Y^2\bar{f}_{\ell}(t,y)\bar{\mu}(dy)}dt} = 0.$$

\begin{rmq} As noted in [GOVW], hypothesis (iii) of this theorem implies hypothesis (iii) of 
Theorem \textbf{\ref{5}} by the Bakry--\'Emery theorem, a proof of which can be found in [L].
\end{rmq}

Using this result, we will deduce bounds on the relative entropy with respect to a well-chosen local Gibbs state. Let us first give a precise definition of what we mean by a local Gibbs state.

\begin{defn} Let $\eta \in Y$. The local Gibbs state associated with $\eta$ is the probability measure on $X$ whose density is given by
\begin{equation} \label{3} 
G^{\eta}(x)\mu(dx) = Z^{-1}\exp\left(\vec{\lambda} \cdot x \right)\mu(dx), \hspace{0.5cm} \vec{\lambda} = NP^t\nabla \bar{H}(\eta).
\end{equation}
\end{defn}

\begin{rmq} Notice that, in this definition, $G^{\eta}(x)$ only depends on the macroscopic profile $Px$. This differs from the local Gibbs measure used in [Y], which (slowly) varied at the microscopic scale. But here, we force the maximum of the macroscopic probability density to be reached at $\eta$, which makes this definition convenient.
\end{rmq}

We can now formulate our results in this abstract setting:  

\begin{thm} \label{4}
Let $G(t,\cdot) = G^{\eta(t)}$ denote the local Gibbs state associated with $\eta(t)$, where $\eta(t)$ solves the macroscopic equation (\ref{9}). Suppose assumptions (i) to (vii) from Theorem \textbf{\ref{10}} hold. Further assume that

(viii) There is $\tau > 0$ such that $A \geq \tau \operatorname{Id}_X$

(ix) The Hessian of $\bar{H}$ is bounded above, i.e. there exists $\Lambda > 0$ such that for all $y \in Y$ we have $\operatorname{Hess} \bar{H}(y) \leq \Lambda \operatorname{Id}$;

Then  

(a) The relative entropy with respect to the local Gibbs state is controlled as follows:  
\begin{equation}\int_0^T{\frac{1}{N}\int{\Phi\left(\frac{f(t,x)}{G(t,x)}\right)G(t,x) \mu(dx)}dt} = \text{\Large O} \left(\sqrt{\Theta(0) + \frac{M}{N} + \frac{1}{M}} \right)
\end{equation}
where the actual constants in the bound depend on $T$, $\lambda$, $\alpha$, $\gamma$, $\rho$, $\kappa$, $\tau$, $C_1$ and $C_2$, but not on $M$ and $N$;

(b) The difference between the microscopic free energy and the free energy associated with the macroscopic profile $\eta$ is bounded as follows:
\begin{align}
\int_0^T&{\left|\frac{1}{N}\int{\Phi(f(t,x))\mu(dx)} - \bar{H}(\eta(t))\right|dt} \notag \\
&= \text{\Large O} \left(\sqrt{\Theta(0) + \frac{M}{N} + \frac{1}{M}} \right) \notag \\
&+ \text{\Large O} \left(\frac{M}{N}\right) \times \max\left(\left|\log\left(\frac{\Gamma(Y,|\cdot|_Y)^{2/(M-1)}}{\Lambda N}\right)\right|\left|\log\left(\frac{\Gamma(Y,|\cdot|_Y)^{2/(M-1)}}{\lambda N}\right)\right|\right)
\end{align}
\end{thm}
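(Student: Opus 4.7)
The central idea is to split the microscopic relative entropy along the macroscopic/fiber decomposition induced by $P$. Because $G(t,\cdot)$ depends on $x$ only through $Px$, the chain rule for relative entropy yields
\[
\int \Phi(f/G)\,G\,d\mu \;=\; \Ent_{\bar G\bar\mu}(\bar f/\bar G) \;+\; \int_Y \Ent_{\mu(\cdot|y)}\!\bigl(f/\bar f(y)\bigr)\,\bar f(y)\,\bar\mu(dy),
\]
where $\bar G$ denotes the density of $G$ viewed as a function of $y=Px$. The two pieces are handled by different tools, and everything is then integrated in time and divided by $N$.

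For the macroscopic contribution, observe that $\bar G\bar\mu$ has density proportional to $\exp(-N[\bar H(y) - \nabla\bar H(\eta)\cdot y])$, a Gibbs measure whose Hamiltonian is $\lambda N$-uniformly convex by (iii) and has $y=\eta$ as its unique critical point. Thus $\bar G\bar\mu$ is $\lambda N$-log-concave, and the Otto--Villani HWI inequality gives
\[
\Ent_{\bar G\bar\mu}(\bar f/\bar G) \;\le\; W_2(\bar f\bar\mu,\bar G\bar\mu)\,\sqrt{I_{\bar G\bar\mu}(\bar f\bar\mu)} \;-\; \tfrac{\lambda N}{2}\,W_2(\bar f\bar\mu,\bar G\bar\mu)^2.
\]
I would control the Wasserstein factor by the triangle inequality: with $\Xi := \Theta(0) + M/N + 1/M$, the distance from $\bar f\bar\mu$ to $\delta_\eta$ has squared time-integral of order $\Xi$ by Theorem~\ref{10}, while the distance from $\delta_\eta$ to $\bar G\bar\mu$ is of order $\sqrt{M/(\lambda N)}$ by the Brascamp--Lieb variance bound applied to $\bar G\bar\mu$ (using that $\eta$ is a critical point of its log-density). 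For the Fisher factor, the identity $\nabla\log\bar G = N\nabla\bar H(\eta)$ yields $I_{\bar G\bar\mu}(\bar f\bar\mu)\lesssim I_{\bar\mu}(\bar f\bar\mu) + N^2|\nabla\bar H(\eta)|^2$; the entropy dissipation identity for (\ref{8}), combined with assumption (viii) $A\ge\tau\operatorname{Id}_X$, gives $\int_0^T I_\mu(f\mu)\,dt\le C_1N/\tau$, and data-processing passes this to the macroscopic Fisher. Integration in time plus Cauchy--Schwarz then bounds the macroscopic contribution by $\sqrt{\int_0^T W_2^2\,dt}\cdot\sqrt{\int_0^T I_{\bar G\bar\mu}\,dt} = O(N\sqrt{\Xi})$, which after division by $N$ is $O(\sqrt{\Xi})$.

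The conditional part uses the fiber LSI($\rho$) (assumption (ii)) applied to the fiber density $f/\bar f(y)$ and integrated over $y$:
\[
\int_Y \Ent_{\mu(\cdot|y)}(f/\bar f(y))\,\bar f\bar\mu(dy) \;\le\; \tfrac{1}{2\rho}\int_X|(\operatorname{Id}_X - NP^tP)\nabla\log f|^2 f\,d\mu,
\]
and the right-hand side is dominated by $I_\mu(f\mu)$, controlled in $L^1(dt)$ by the dissipation bound above. The main obstacle I expect here is that this naive estimate delivers only an $O(1)$ contribution after normalization by $N$; to reach the stated $O(\sqrt\Xi)$ one must work instead with the dissipation of the relative entropy $KL(f\mu\|G\mu)$ itself (whose initial value is comparable to $N\Theta(0)$ via a Talagrand-type comparison) and exploit the two-scale LSI from Theorem~\ref{5} applied to $G\mu$, which is admissible because $G\mu$ shares the fiber conditionals of $\mu$ and has a $\lambda N$-log-concave macroscopic marginal.

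For part (b), I would write $\tfrac{1}{N}\int\Phi(f)\,d\mu = \tfrac{1}{N}\int\Phi(f/G)G\,d\mu + \tfrac{1}{N}\int f\log G\,d\mu$; part (a) controls the first term, and using $\log G = -\log Z + N\nabla\bar H(\eta)\cdot Px$ one rewrites the second as $-\log Z/N + \nabla\bar H(\eta)\cdot\int y\,\bar f\,\bar\mu(dy)$. Cauchy--Schwarz plus the Wasserstein bound from Theorem~\ref{10} handles $\nabla\bar H(\eta)\cdot(\bar m(t)-\eta(t))$. The partition function $Z = \int e^{-N(\bar H(y)-\nabla\bar H(\eta)\cdot y)}\,dy$ is estimated by a Gaussian Laplace comparison, sandwiching $\bar H - \nabla\bar H(\eta)\cdot y$ between its $\lambda$-quadratic lower bound (from (iii)) and its $\Lambda$-quadratic upper bound (from (ix)); evaluating the resulting Gaussian integrals via $\Gamma(Y,|\cdot|_Y)$ produces the $O(M/N)$ error with the logarithmic factors $\log(\Gamma^{2/(M-1)}/(\lambda N))$ and $\log(\Gamma^{2/(M-1)}/(\Lambda N))$ appearing in the statement. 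Combining everything yields (b).
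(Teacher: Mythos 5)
Your overall architecture matches the paper's: the splitting of the microscopic relative entropy into a fiber part and a macroscopic part (using that $G$ depends only on $Px$), the HWI inequality for the log-concave measure $\bar G\bar\mu$, the triangle inequality in $W_2$ together with the second-moment bound for $\bar G\bar\mu$ about $\eta$, the splitting $I_{\bar G\bar\mu}(\bar f\bar\mu)\lesssim I_{\bar\mu}(\bar f\bar\mu)+N^2|\nabla\bar H(\eta)|^2$ with the gradient-flow bound $\int_0^T|\nabla\bar H(\eta)|^2dt\le(C_2+\beta)/\tau$, and the route to (b) via the Laplace estimate of $\log Z$ sandwiched between the $\lambda$- and $\Lambda$-Gaussians. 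The genuine gap is in the fiber (conditional) contribution. You correctly observe that bounding $|(\operatorname{id}_X-NP^tP)\nabla f|^2$ by $|\nabla f|^2$ and using the dissipation identity only yields an $O(1)$ contribution after dividing by $N$, but your proposed repair is not viable and misses the intended mechanism. The paper's fix is simply hypothesis (vi): $|(\operatorname{id}_X-NP^tP)x|^2\le\gamma M^{-2}\langle x,Ax\rangle_X$, applied pointwise to $x=\nabla f$, converts the fiber gradient into the $A$-weighted Dirichlet form with a gain of $M^{-2}$; combined with $\int_0^T\int\frac{\nabla f\cdot A\nabla f}{f}\,d\mu\,dt\le C_1N$ (from (vii) and nonnegativity of the entropy) this gives the time-integrated conditional term $\le\gamma C_1/(2\rho M^2)$, which is absorbed into the stated $O(\sqrt{1/M})$. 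Your alternative --- working with the dissipation of $\operatorname{Ent}_{G\mu}(f/G)$ together with a ``Talagrand-type comparison'' to $N\Theta(0)$ --- fails on two counts: the reference measure $G(t,\cdot)\mu$ is time-dependent, so its entropy-dissipation identity acquires uncontrolled terms involving $\partial_tG$; and Talagrand's inequality bounds $W_2^2$ by the entropy, not the reverse, so the initial relative entropy cannot be dominated by $N\Theta(0)$ (assumption (vii) only gives $\operatorname{Ent}_\mu(f(0))\le C_1N$, which is $O(N)$, not $o(N)$).

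Two smaller imprecisions, neither fatal. First, passing from the microscopic to the macroscopic Fisher information is not bare data processing: since the conditional measures $\mu(dx|y)$ vary with $y$, one needs the estimate of [GOVW, Proposition 20] (Proposition 3.4 in the paper), whose constant $(\kappa^2+\rho^2)/\rho^2$ requires assumptions (i) and (ii). Second, the bound $\int|y-\eta|_Y^2\,\bar G(y)\bar\mu(dy)\le M/(\lambda N)$ is a second moment about the \emph{minimizer} $\eta$, not about the barycenter, so Brascamp--Lieb does not apply verbatim; the paper uses a separate elementary lemma (Lemma 2.2) valid for measures $e^{-f}dx$ with $f$ uniformly $\lambda$-convex and minimized at the reference point.
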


\begin{rmq} [On the assumptions]
Assumption $(viii)$ is always true, since we assumed $A$ to be a positive symmetric operator on $X$, but I write it down this way because, in the next Corollary, I will require this lower bound to be uniform in $N$, and setting it this way makes this requirement clearer. When the Hessian of $H$ is bounded above (which will be the case in the next section for the application to Kawasaki dynamics), both assumptions $(i)$ and $(ix)$ will be satisfied. As for $\Gamma(Y,|\cdot|_Y)$, it will have a nice behavior when $|\cdot|_Y$ is comparable to the $\textsl{L}^2$ norm, as we shall see in the proof of Theorem \ref{30}.
\end{rmq}

With this theorem, we can obtain quantitative controls in the hydrodynamic limit: 

\begin{cor} \label{31}
Consider a sequence of data $\{X_{\ell}, Y_{\ell}, N_{\ell}, P_{\ell}, A_{\ell}, \mu_{\ell}, f_{0,\ell}, \eta_{0,\ell}\}$ satisfying the previous assumptions, with uniform constants $\alpha, \lambda, \beta, C_1, C_2, \tau$ and $\Lambda$. Assume moreover that
\begin{equation} \label{601} N_{\ell} \rightarrow \infty; \hspace{1cm} M_{\ell} \rightarrow \infty; \hspace{1cm} \frac{M_{\ell}}{N_{\ell}} \rightarrow 0;
\end{equation}
\begin{equation} \label{602}
\frac{M_{\ell}}{N_{\ell}}\log\left(\frac{\Gamma(Y_{\ell},|\cdot|_Y)^{1/(M-1)}}{N_{\ell}}\right) \rightarrow 0
\end{equation}
and that the sequence of initial data satisfies 
$$\Theta_{\ell}(0) \rightarrow 0.$$
Then we have, for all $T > 0,$

(a') $$\int_0^T{\frac{1}{N}\int{\Phi\left(\frac{f_{\ell}(t,x)}{G_{\ell}(t,x)}\right)G_{\ell}(t,x) \mu_{\ell}(dx)}dt} \longrightarrow 0;$$

(b') $$\int_0^T{\left|\frac{1}{N}\int{\Phi(f_{\ell}(t,x))\mu_{\ell}(dx)} - \bar{H_{\ell}}(\eta_{\ell}(t))\right|dt} \longrightarrow 0.$$
\end{cor}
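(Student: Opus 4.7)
The plan is to apply Theorem \ref{4} along the sequence and verify that each explicit factor in the $O(\cdot)$-bounds of parts (a) and (b) tends to zero under the scaling assumptions. The uniformity hypothesis on the constants $\alpha$, $\lambda$, $\beta$, $C_1$, $C_2$, $\tau$, $\Lambda$ (together with the auxiliary constants $\rho$, $\gamma$, $\kappa$ built from them) ensures that the multiplicative constants hidden in the $O(\cdot)$-notation can be chosen uniformly in $\ell$, so the whole argument reduces to showing that the explicit factors converge to zero.

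For (a'), inserting the sequence into Theorem \ref{4}(a) yields the bound
\[
O\!\left(\sqrt{\Theta_\ell(0) + M_\ell/N_\ell + 1/M_\ell}\right),
\]
and each of $\Theta_\ell(0)$, $M_\ell/N_\ell$, $1/M_\ell$ vanishes by the hypotheses $\Theta_\ell(0) \to 0$ and (\ref{601}). Sending $\ell \to \infty$ gives (a') directly.

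For (b'), the square-root term is disposed of as in (a'). The extra term has the form $\tfrac{M_\ell}{N_\ell}$ multiplied by $\max_{c\in\{\lambda,\Lambda\}}\left|\log\bigl(\Gamma(Y_\ell,|\cdot|_Y)^{2/(M_\ell-1)}/(cN_\ell)\bigr)\right|$. I would use the algebraic identity
\[
\log\!\left(\frac{\Gamma(Y_\ell,|\cdot|_Y)^{2/(M_\ell-1)}}{c\, N_\ell}\right) = 2\log\!\left(\frac{\Gamma(Y_\ell,|\cdot|_Y)^{1/(M_\ell-1)}}{N_\ell}\right) + \log\!\left(\frac{N_\ell}{c}\right)
\]
to split this logarithm into a piece that (after multiplying by $M_\ell/N_\ell$) vanishes by (\ref{602}) and a piece that vanishes by combining (\ref{601}) and (\ref{602}) (the latter implicitly constraining the growth of $\Gamma^{1/(M_\ell-1)}$ relative to $N_\ell$). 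This last reduction is the only mildly subtle point: the entire purpose of condition (\ref{602}) is to be calibrated precisely so as to absorb the logarithmic correction appearing in Theorem \ref{4}(b), so I do not expect any new analytic input beyond Theorem \ref{4} itself to be required.
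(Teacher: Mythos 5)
Your overall strategy is exactly the intended one: the paper offers no separate argument for Corollary \ref{31}, treating it as an immediate specialization of Theorem \ref{4} to the sequence, and your verification of part (a') and of the square-root term in (b') is complete and correct (the positivity of $M_\ell/N_\ell$ lets you pass from (\ref{602}) to the absolute-value statement you actually need).

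The one place where your argument does not close is precisely the step you flag as ``mildly subtle.'' Your splitting identity is correct, and the piece $2\log\bigl(\Gamma(Y_\ell,|\cdot|_Y)^{1/(M_\ell-1)}/N_\ell\bigr)$ is killed by (\ref{602}); but the residual piece $\tfrac{M_\ell}{N_\ell}\log(N_\ell/c)$ requires $\tfrac{M_\ell}{N_\ell}\log N_\ell \to 0$, and this does \emph{not} follow from (\ref{601}) and (\ref{602}) as literally stated. Writing $u_\ell = \tfrac{M_\ell}{N_\ell}\log\Gamma^{1/(M_\ell-1)}$ and $v_\ell = \tfrac{M_\ell}{N_\ell}\log N_\ell$, hypothesis (\ref{602}) only says $u_\ell - v_\ell \to 0$, while the quantity appearing in Theorem \ref{4}(b) is (up to the harmless constant $c$) $2u_\ell - v_\ell = 2(u_\ell - v_\ell) + v_\ell$; nothing constrains $v_\ell$ itself (take, e.g., $M_\ell = N_\ell/\log N_\ell$ and $\log\Gamma^{1/(M_\ell-1)} = \log N_\ell$). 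The mismatch is between the exponent $1/(M-1)$ in (\ref{602}) and the exponent $2/(M-1)$ in the bound of Theorem \ref{4}(b); almost certainly (\ref{602}) is meant to carry the exponent $2/(M-1)$, in which case your argument goes through verbatim and the constants $\lambda,\Lambda$ are absorbed using (\ref{601}). You should either state this corrected form of (\ref{602}), or add the (mild) extra hypothesis $\tfrac{M_\ell}{N_\ell}\log N_\ell \to 0$ explicitly rather than asserting that it is implicit. Note that in the Kawasaki application one has $\Gamma(Y,|\cdot|_Y)^{1/(M-1)} = \sqrt{2\pi M}$, so there (\ref{602}) is essentially equivalent to $\tfrac{M}{N}\log N \to 0$ and the distinction is invisible, which is presumably why it was not caught.
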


Let us summarize these results in the language of statistical physics :

-The microscopic variables are approximately distributed according to a local Gibbs state, in the sense of relative Kullback information, in a time-integrated sense on $[0,T]$.

-The microscopic free energy converges to the hydrodynamic free energy, in $\textsl{L}^1([0,T])$.

In the next section, in the case of a concrete example, we will reinforce this into a convergence uniformly in time as long as we stay away from zero.

\begin{rmq}
Using the Otto-Villani theorem, which states that the Wasserstein distance $W_2(\nu, \mu)^2$ is controlled by the entropy $\operatorname{Ent}_{\mu}(\nu)$ when $\mu$ satisfies a logarithmic Sobolev inequality (see [OV], or [Go] for an alternate proof), it is possible to show that (a') implies
$$\int_0^T{\frac{1}{N}W_2(f(t)\mu, G(t)\mu)^2 \hspace{1mm} dt} \longrightarrow 0,$$
with the Wasserstein distance associated to the $L^2$ structure, rather than the penalized $A^{-1}$ scalar product that appears in [GOVW]. Since the $L^2$ norm is strictly stronger than the $A^{-1}$ norm, this shows that our convergence in entropy result is strictly stronger than the convergence (\ref{eq_convergence}), as long as we integrate in time. We will later see that this convergence also holds pointwise, for strictly positive times, even if it only holds in the weaker $A^{-1}$ sense at time zero.

Our results also imply that, at macroscopic scale, we have
$$\int{|y - \eta(t)|^2_Y \bar{f}(t,y)\bar{\mu}(dy)} \longrightarrow 0$$
for any time $t > 0$, while this convergence was only proven in a time-integrated sense in [GOVW]. This statement follows from the convergence to $0$ of $W_2(\bar{f}\bar{\mu}, \bar{G}\bar{\mu})$ and $W_2(\bar{G}\bar{\mu}, \delta_{\eta})$, and the triangle inequality for Wasserstein distances.
\end{rmq}

One of the main tools we shall use is the following interpolation inequality, due to Otto and Villani ([OV], Theorem 5): 

\begin{thm} \label{500} Let $\mu(dx) = e^{-H(x)}dx$ be a probability measure on $\mathbb{R}^n$ with a finite moment of order 2 such that $H \in \textsl{C}^2(\mathbb{R}^n)$ and $\operatorname{Hess} H \geq \lambda I_n$, $\lambda \in \mathbb{R}$. Then for any probability measure $\nu$ on $\mathbb{R}^n$ that is absolutely continuous with respect to $\mu$, we have
$$Ent_{\mu}(\nu) \leq W_2(\mu, \nu)\sqrt{I_{\mu}(\nu)} - \frac{\lambda}{2}W_2(\mu, \nu)^2.$$
In particular, if $H$ is convex, then 
\begin{equation} \label{HWI}
\operatorname{Ent}_{\mu}(\nu) \leq W_2(\mu, \nu)\sqrt{I_{\mu}(\nu)}.
\end{equation}
\end{thm}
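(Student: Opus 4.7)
The plan is to exploit the Riemannian-like structure of optimal transport on $\mathbb{R}^n$. By Brenier's theorem, since $\mu$ and $\nu$ have finite second moments and $\nu$ is absolutely continuous (the general case follows by standard regularization, and otherwise both sides are trivially comparable), there is an optimal map $T = \nabla\varphi$ with $T_{\#}\nu = \mu$ and $\int |T(x)-x|^2\,d\nu = W_2(\nu,\mu)^2$. I would interpolate along the displacement geodesic
\begin{equation*}
\nu_t := \left((1-t)\operatorname{id}_{\mathbb{R}^n} + tT\right)_{\#}\nu, \qquad t\in[0,1],
\end{equation*}
so that $\nu_0 = \nu$ and $\nu_1 = \mu$, and study $F(t) := \operatorname{Ent}_\mu(\nu_t)$.

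The decisive ingredient is McCann's displacement convexity: under $\operatorname{Hess} H \geq \lambda I_n$, the function $F$ is $\lambda W_2(\nu,\mu)^2$-convex on $[0,1]$, i.e.\ $F''(t) \geq \lambda W_2(\nu,\mu)^2$ in the appropriate generalized sense. This is obtained by splitting $F(t) = \int g_t \log g_t \, dx + \int H\, d\nu_t$ with $g_t = d\nu_t/dx$, then applying the Monge--Amp\`ere change of variables along the geodesic. Two convexity facts combine: the entropy term is convex along displacement interpolations (concavity of $\det^{1/n}$), and the potential term $\int H\, d\nu_t$ is $\lambda$-convex precisely because $\operatorname{Hess} H \geq \lambda I_n$. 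Integrating $F'' \geq \lambda W_2(\nu,\mu)^2$ twice on $[0,1]$ and using $F(1) = \operatorname{Ent}_\mu(\mu) = 0$ yields
\begin{equation*}
\operatorname{Ent}_\mu(\nu) \;=\; F(0) \;\leq\; -F'(0^+) - \frac{\lambda}{2} W_2(\nu,\mu)^2.
\end{equation*}

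It then remains to estimate $-F'(0^+)$. A first-variation computation, using the continuity equation satisfied by $\nu_t$ with velocity field $x\mapsto T(x)-x$ at $t=0$ and integrating by parts, gives
\begin{equation*}
-F'(0^+) \;=\; \int \langle T(x)-x,\, \nabla \log f(x)\rangle \, d\nu(x), \qquad f := \frac{d\nu}{d\mu}.
\end{equation*}
Applying Cauchy--Schwarz with respect to $\nu$ bounds this by
\begin{equation*}
\left(\int |T(x)-x|^2\, d\nu\right)^{1/2}\left(\int |\nabla \log f|^2\, d\nu\right)^{1/2} \;=\; W_2(\nu,\mu)\sqrt{I_\mu(\nu)},
\end{equation*}
which delivers the HWI inequality; the case $\lambda \geq 0$ then dropping a nonnegative term yields (\ref{HWI}). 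The main obstacle is the rigorous justification of the displacement convexity and of the first-variation formula: both require enough regularity on $T$ and $\nu_t$ to validate the Monge--Amp\`ere change of variables and the integration by parts. I would handle this by a density argument, approximating $\nu$ by smooth compactly supported densities and passing to the limit using lower semicontinuity of $\operatorname{Ent}_\mu$ and $I_\mu$ and continuity of $W_2$; an alternative (as in [Go]) is to avoid optimal transport entirely and run an entropy-production argument along the heat semigroup, which reaches the same inequality with only classical calculus.
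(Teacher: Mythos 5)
The paper does not actually prove this theorem: it states it as Theorem 5 of [OV] and explicitly refers the reader to that article, so there is no internal proof to compare against. Your proposal is the standard optimal-transport proof of the HWI inequality (essentially the argument of Otto--Villani, made rigorous along the lines of Cordero-Erausquin's mass-transport treatment), and it is correct in outline: displacement interpolation $\nu_t$ from $\nu$ to $\mu$, the $\lambda W_2^2$-convexity of $F(t)=\operatorname{Ent}_\mu(\nu_t)$ coming from the displacement convexity of the Boltzmann entropy plus the $\lambda$-convexity of $t\mapsto\int H\,d\nu_t$, the convexity inequality $F(0)\leq -F'(0^+)-\tfrac{\lambda}{2}W_2^2$ using $F(1)=0$, and Cauchy--Schwarz on the first variation to get $|F'(0^+)|\leq W_2(\nu,\mu)\sqrt{I_\mu(\nu)}$ (your sign convention for $F'(0^+)$ is off by the orientation of the geodesic, but this is immaterial since you only use the Cauchy--Schwarz bound on its modulus). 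The only substantive caveat is the one you already flag: the displacement convexity and the first-variation formula require Aleksandrov second differentiability of the Brenier potential and the Monge--Amp\`ere change of variables, and the approximation argument needs lower semicontinuity of $\operatorname{Ent}_\mu$ and $I_\mu$ together with $W_2$-continuity; these are standard but are where all the technical work of [OV] lives.
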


We refer to the original article [OV] for a proof of this theorem. This will allow us to transform a convergence in Wasserstein distance and a bound on the Fisher information into a convergence of the relative entropy. However, if we apply this result immediately in the microscopic scale, if we use the usual Euclidean structure, the Wasserstein distance between $f \mu$ and the local Gibbs state does not go to zero. And if we use the penalized Euclidean structure $\langle A^{-1} \cdot, \cdot \rangle$, the lower bound on the Hessian will grow too fast, and  the additional term $(\inf \operatorname{Hess} H) W_{A^{-1}, 2}(f \mu, G\mu)$ will go to infinity. So, in order to get rid of the additional term, we will go to macroscopic scale, where the Hessian of $\bar{H}$ is convex, and use inequality (\ref{HWI}).

\begin{rmq} In this context, Kosygina's method would suggest to decompose the macroscopic relative entropy
\begin{align}
\frac{1}{N}\Ent_{\bar{G}\bar{\mu}}(\bar{f}/\bar{G}) &= \frac{1}{N}\int_Y{\tilde{f}\log \tilde{f} dy} + \int{\bar{H}(y)\bar{f}(y)\bar{\mu}(dy)} \notag \\
&+ \frac{1}{N}\log \bar{Z} - \int{\nabla \bar{H}(\eta)\cdot y \bar{f}(y)\bar{\mu}(dy)} \notag
\end{align}
where $\tilde{f} = e^{-N\bar{H}}f$ is the density of the coarse-grained state with respect to the Lebesgue measure. We can get a bound on the time-integral of the sum of the last three terms of the same type as those in Theorem \textbf{\ref{4}}. So the problem would be to bound $\frac{1}{N}\int_Y{\tilde{f}\log \tilde{f} dy}$. 

For the application to Kawasaki dynamics, Kosygina proved a vanishing upper bound on $\frac{1}{N}\int_{t'}^t{\int_Y{\tilde{f(s)}\log \tilde{f(s)} dy}ds}$ for times $t > t' > 0$, which has the same order of magnitude in the system size $N = KM$ as ours. Her proof consists in showing that we can replace the law of our process Kawasaki dynamics with another process for which this problem is easier. This method uses Girsanov's theorem and specific information on the operator $A$, and I do not know how to replicate it in the abstract setting considered here. Moreover, unlike our method, it does not work when $t' = 0$. However, for discrete spins, the quantity that would play the role of $\int_Y{\tilde{f}\log \tilde{f} dy}$ is non-positive, which makes her method very convenient when applied to particle systems such as exclusion processes. It is not clear whether the two-scale approach can be successfully applied to the study of discrete systems.
\end{rmq}

\subsection{Kawasaki dynamics}

We shall now present the application of the two previous theorems to Kawasaki dynamics. 
We consider a one dimensional $N$-periodic lattice system with continuous spin variables. The law of each variable is given by a Ginzburg-Landau potential 
$\psi : \mathbb{R} \rightarrow \mathbb{R}$, which we shall assume to be of the form 
\begin{equation} \label{24}
\psi(x) = \frac{1}{2}x^2 + \delta \psi(x), \hspace{0.5cm} ||\delta \psi||_{C^2(\mathbb{R})} < \infty.
\end{equation}
We shall also force the mean spin to take a given value $m \in \mathbb{R}$. That is, the random vector $x = (x_1,..,x_N)$ will take its values in the $(N-1)$-dimensional hyperplane with mean $m \in \mathbb{R}$ : 
$$X_{N,m} := \left\{(x_1,..,x_N) \in \mathbb{R}^N; \hspace{0.1cm} \frac{1}{N}\sum x_i = m\right\}$$
equipped with the $\ell^2$ inner product,
$$\langle x, \tilde{x} \rangle_{X_{N,m}} := \sum x_i \tilde{x}_i.$$
We shall consider the canonical ensemble $\mu_{N,m}$, which is the distribution of the random variables $x_1,..,x_N$ conditioned on the event that their mean value is given by $m \in \mathbb{R}$. Its density with respect to the Lebesgue measure on $X_{N,m}$ is given by
\begin{equation} \label{25}
\mu_{N,m}(dx) = \frac{1}{Z}1_{\sum x_i = Nm}\exp\left(-\underset{i=1}{\stackrel{N}{\sum}} \psi(x_i) \right).
\end{equation}
The logarithmic density $H$ is evidently given by $H(x) = \underset{i=1}{\stackrel{N}{\sum}} \psi(x_i) + \log Z$.

We shall now introduce the macroscopic state necessary to apply the abstract results of the previous section. 
We first divide the N spins into M blocks. To fix ideas, we shall assume that all these blocks have the same size $K$, 
such that $N = KM$. This assumption is not necessary (all that will be needed is that the sizes of all the blocks are of same order) 
but it will make things a lot clearer. See [GOVW Remark 30] for a full explanation about this. 
We will now define the macroscopic variables as the mean of each block. Therefore they form a set of $M$ real numbers that still have mean $m$. 
The associated macroscopic space is thus
$$Y_{M,m} := \left\{(y_1,..,y_M) \in \mathbb{R}^M;\hspace{0.1cm} \frac{1}{M}\sum y_i = m\right\}$$
which we endow with the $\textsl{L}^2$ inner product 
$$\langle y, \tilde{y} \rangle_Y := \frac{1}{M}\sum y_j \tilde{y}_j.$$
Then the projection operator $P_{N,K} : X_{N,m} \rightarrow Y_{M,m}$ that associates to a given microscopic profile its macroscopic profile is given by
$$P_{N,K}(x_1,..,x_N) = (y_1,..,y_M); \hspace{1cm} y_j = \frac{1}{K}\underset{i = (j-1)K + 1}{\stackrel{jK}{\sum}} x_i,$$
and it is easy to check that $PNP^t = id_Y$. We can explicitly compute the coarse-grained Hamiltonian $\bar{H}$:
$$\bar{H}(y) = \frac{1}{M}\underset{j=1}{\stackrel{M}{\sum}}\psi_K(y_i) + \frac{1}{N}\log \bar{Z}$$
where 
\begin{equation} \label{22}
\psi_K(m) = -\frac{1}{K}\log \left(\int_{X_{K,m}}\exp(-\underset{i=1}{\stackrel{K}{\sum}}\psi(x_i))dx\right)
\end{equation}
and $\bar{Z}$ is the normalization constant. The gradient and Hessian of $\bar{H}$ are then given by
\begin{equation} \label{23}
(\nabla_Y \bar{H}(y))_Y = \psi_K'(y_i); \hspace{1cm} (\operatorname{Hess}_Y \bar{H})_{ij} = \psi_K''(y_i)\delta_{ij}.
\end{equation}

As a consequence of the principle of equivalence of ensembles (quantified through a local version of the Cram\'er theorem), the following proposition explains the behavior of $\psi_K$ when $K$ is large. It was proven in the Appendix of [GOVW]. 

\begin{prop} \label{203} If $\psi$ satisfies (\ref{24}) and $\psi_K$ is defined by (\ref{22}), then 
$$\psi_K \underset{K \uparrow \infty}{\rightarrow} \varphi \hspace{1cm} \text{in the uniform } \textsl{C}^2 \text{ topology},$$
where $\varphi$ is the Cram\'er transform of $\psi$, defined by
\begin{equation} \label{27}
\varphi(m) = \underset{\sigma \in \mathbb{R}}{\sup}\left(\sigma m - \log \int{\exp(\sigma x - \psi(x))dx}\right).
\end{equation}
\end{prop}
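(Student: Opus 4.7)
The plan is to recognize the integral defining $\psi_K(m)$ as (proportional to) the probability density at $m$ of the empirical mean $\bar S_K := K^{-1}\sum_{i=1}^K x_i$ of $K$ i.i.d.\ samples drawn from the single-site law $Z^{-1}e^{-\psi(x)}\,dx$, and then extract uniform $C^2$ asymptotics from a refined local central limit theorem. Cram\'er's classical theorem already gives the leading-order identity $-K^{-1}\log(\text{density at }m) \to \varphi(m)$ pointwise in $m$. The content of the proposition is that the error is $O(K^{-1}\log K)$ and that one can differentiate twice in $m$ with errors still vanishing, uniformly on compacts. The assumption (\ref{24}) that $\psi = x^2/2 + \delta\psi$ with $\|\delta\psi\|_{C^2}<\infty$ will supply exactly the uniform characteristic-function estimates this requires.

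To execute the plan I introduce the log-moment generating function $\Lambda(\sigma) := \log\int e^{\sigma x-\psi(x)}\,dx$ and the tilted probability measures $\nu_\sigma(dx) := e^{\sigma x-\Lambda(\sigma)-\psi(x)}\,dx$. Under (\ref{24}), $\Lambda$ is smooth with $\Lambda''$ bounded above and below uniformly, so $\sigma \mapsto \Lambda'(\sigma)$ is a smooth diffeomorphism of $\R$ and its inverse $m \mapsto \sigma(m)$ gives the Legendre duality $\varphi(m) = \sigma(m)m - \Lambda(\sigma(m))$, $\varphi'(m)=\sigma(m)$, $\varphi''(m)=1/\Lambda''(\sigma(m))$. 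Tilting the integrand, for any $\sigma$,
\[
\int_{X_{K,m}} e^{-\sum\psi(x_i)}\,dx = e^{K\Lambda(\sigma)-K\sigma m}\,h_K(\sigma,m),
\]
where $h_K(\sigma,m)$ is, up to a harmless dimensional Jacobian, the density of $\bar S_K$ at $m$ under $\nu_\sigma^{\otimes K}$. Taking $-K^{-1}\log$ and choosing the optimal tilt $\sigma=\sigma(m)$ collapses the first two terms into $\varphi(m)$, leaving
\[
\psi_K(m) = \varphi(m) - \frac{1}{K}\log h_K(\sigma(m),m) + O(1/K).
\]

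It remains to control $K^{-1}\log h_K(\sigma(m),m)$ and its first two $m$-derivatives uniformly on compacts. Under $\nu_\sigma$ the mean is $\Lambda'(\sigma)$, so the choice $\sigma=\sigma(m)$ makes $m$ the mean; a local CLT with an Edgeworth correction then gives
\[
h_K(\sigma(m),m) = \sqrt{\tfrac{K}{2\pi\Lambda''(\sigma(m))}}\bigl(1+O(K^{-1/2})\bigr),
\]
with analogous expansions for $\partial_m h_K$ and $\partial_m^2 h_K$. These are obtained by differentiating the Fourier-inversion representation
\[
h_K(\sigma,m) = \frac{K}{2\pi}\int_{\R} e^{-iKmt}\,\hat\nu_\sigma(t)^K\,dt
\]
and estimating $|\hat\nu_\sigma(t)|$ uniformly in bounded $\sigma$. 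Here the structural assumption (\ref{24}) pays off: writing $\nu_\sigma$ as $e^{-\delta\psi(x)}$ times a shifted unit-variance Gaussian yields a Gaussian-type decay $|\hat\nu_\sigma(t)|\le e^{-ct^2/(1+t^2)}$ uniformly in $\sigma$ on compacts. Chain-ruling back through $\sigma(m)$ produces $\psi_K-\varphi = O(K^{-1}\log K)$ in $C^0$ and $O(K^{-1/2})$ in $C^2$, which is the desired uniform $C^2$ convergence.

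The main obstacle is the uniform local CLT with derivative controls: the pointwise LCLT is textbook, but one needs the error terms in the Edgeworth expansion to commute with differentiation in $m$ and to remain uniform as $\sigma$ ranges over a compact set. The Gaussian-plus-bounded-$C^2$ structure of $\psi$ is precisely what makes this uniformity automatic, and once it is in place the derivative estimates reduce to routine Fourier-analytic bookkeeping.
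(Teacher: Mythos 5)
The paper itself does not prove this proposition: it is quoted from the appendix of [GOVW] (the quantitative local Cram\'er theorem), and the proof given there proceeds exactly along the lines you propose --- exponential tilting to the measures $\nu_\sigma$, identification of $\psi_K(m)-\varphi(m)$ with $-K^{-1}\log$ of the density of the empirical mean at its own mean under the optimal tilt, and a local central limit theorem with error and derivative control obtained from the Fourier inversion formula, the key input being a characteristic-function estimate for $\nu_\sigma$ that exploits the decomposition of $e^{-\psi}$ as a bounded perturbation of a unit Gaussian. So your outline is the right one and the individual steps are sound.

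One point needs attention. You repeatedly restrict to ``$\sigma$ in a compact set'' and claim the resulting convergence ``uniformly on compacts'' in $m$, but the proposition asserts convergence in the uniform $C^2$ topology on all of $\mathbb{R}$, and this global uniformity is genuinely used downstream: the convexity lemma for the coarse-grained Hamiltonian needs $\psi_K''\ge\lambda$ at \emph{every} argument, and the verification of assumption (ix) needs $\psi_K''\le\Lambda$ everywhere. As written, your argument only yields locally uniform convergence. The repair is available inside your own framework: since $\nu_\sigma(dx)=C_\sigma\,e^{-(x-\sigma)^2/2}e^{-\delta\psi(x)}dx$ is, uniformly in $\sigma\in\mathbb{R}$, squeezed between two fixed multiples of the unit-variance Gaussian centered at $\sigma$, the variance bounds $0<c\le\Lambda''(\sigma)\le C$ and the decay $|\hat\nu_\sigma(t)|\le e^{-ct^2/(1+t^2)}$ hold uniformly over \emph{all} of $\mathbb{R}$ and not merely on compacts: after recentring at $\sigma$, the only $\sigma$-dependence left is through $\delta\psi(\cdot+\sigma)$, whose $C^2$ norm is bounded independently of $\sigma$. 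You should state and use these global bounds; with them your Fourier bookkeeping goes through unchanged and delivers the globally uniform $C^2$ convergence that the proposition claims.
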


Using this proposition, the strict convexity of $\varphi$ and the expression of the Hessian (\ref{23}), the following lemma is easily deduced : 
\begin{lem} [Convexity of the coarse-grained Hamiltonian]
There exists $K_0 < \infty$ and $\lambda > 0$ depending only on $\psi$ such that, for any $K \geq K_0$,  $$\left\langle \tilde{y}, \operatorname{Hess}\bar{H}(y)\tilde{y}\right\rangle_Y \geq \lambda \langle \tilde{y}, \tilde{y} \rangle_Y.$$
\end{lem}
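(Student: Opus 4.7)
The plan is to reduce the Hessian bound to a uniform pointwise estimate on $\psi_K''$, and then leverage the $C^2$ convergence of Proposition \ref{203} together with uniform strong convexity of the Cramér transform $\varphi$.

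By the explicit expression (\ref{23}) and the definition of $\langle\cdot,\cdot\rangle_Y$,
$$\left\langle \tilde y, \operatorname{Hess}_Y \bar H(y)\tilde y \right\rangle_Y = \frac{1}{M}\sum_{i=1}^M \psi_K''(y_i)\tilde y_i^2 \geq \left(\inf_{m\in\mathbb{R}} \psi_K''(m)\right)\langle \tilde y, \tilde y\rangle_Y,$$
so the claim reduces to showing $\inf_{\mathbb{R}} \psi_K'' \geq \lambda > 0$ for all $K \geq K_0$. Since $\psi_K'' \to \varphi''$ uniformly on $\mathbb{R}$ by Proposition \ref{203}, it is enough to prove that $\varphi'' \geq 2\lambda > 0$ uniformly; then for $K$ large enough $\|\psi_K'' - \varphi''\|_\infty \leq \lambda$, whence $\psi_K'' \geq \lambda$.

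The uniform strong convexity of $\varphi$ is where the work lies. Since $\varphi$ is the Legendre transform (\ref{27}) of $F(\sigma) := \log\int e^{\sigma x - \psi(x)}\,dx$, one has $\varphi''(m) = 1/F''(\sigma^{\star}(m))$ whenever $F'(\sigma^{\star}) = m$, and $F''(\sigma)$ equals the variance of $x$ under the tilted probability measure $\nu_\sigma(dx) \propto e^{\sigma x - \psi(x)}\,dx$. Using (\ref{24}) and completing the square, $\nu_\sigma$ has density with respect to the Gaussian measure $\mathcal{N}(\sigma,1)$ proportional to $e^{-\delta\psi(x)}$, which is pinched between $e^{-\|\delta\psi\|_\infty}$ and $e^{\|\delta\psi\|_\infty}$. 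A direct comparison then yields the uniform-in-$\sigma$ bound $\operatorname{Var}_{\nu_\sigma}(x) \leq e^{2\|\delta\psi\|_\infty}$, so $\varphi'' \geq e^{-2\|\delta\psi\|_\infty}$ on all of $\mathbb{R}$.

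The main technical point is this last uniform variance bound; it hinges crucially on the structural assumption (\ref{24}) that $\psi$ is a Gaussian plus a $C^2$-bounded perturbation, and would fail for general $\psi$ with merely a lower bound on $\psi''$. Once the bound is in hand, combining it with Proposition \ref{203} as above yields the desired constants $K_0$ and $\lambda$ depending only on $\psi$.
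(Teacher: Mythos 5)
Your proof is correct and follows essentially the same route the paper takes: reduce to a uniform positive lower bound on $\psi_K''$ via the diagonal form of $\operatorname{Hess}\bar H$, then transfer the uniform strong convexity of $\varphi$ to $\psi_K$ for large $K$ using the uniform $C^2$ convergence of Proposition \ref{203}. The only difference is that the paper simply invokes the strict convexity of $\varphi$ (citing [GOVW, Lemma 41] for the uniform bounds on $(\varphi^*)''$), whereas you supply the variance/tilted-measure argument for it yourself — correctly, and rightly emphasizing that a \emph{uniform} lower bound on $\varphi''$, not mere strict convexity, is what the argument needs.
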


This lemma, among others, allowed to apply the abstract criterion for logarithmic Sobolev inequalities to the present setting, and obtain

\begin{thm} Let $\psi$ satisfy (\ref{24}) and let $\mu_{N,m}$ be defined by (\ref{25}). Then there exists $\rho > 0$ such that for any $N \in \mathbb{N}$ and $m \in \mathbb{R}$, $\mu_{N,m}$ satisfies LSI($\rho$).
\end{thm}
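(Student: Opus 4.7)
The plan is to apply the two-scale LSI theorem stated above to the decomposition $P_{N,K}: X_{N,m} \to Y_{M,m}$ introduced in this subsection, with constants independent of $N$ and $m$. To get uniformity in $N$, I would fix once and for all a block size $K = K_0$ provided by the convexity lemma, set $M = N/K_0$, and handle non-divisibility by allowing blocks of slightly unequal but comparable size, as in [GOVW, Remark~30]. Uniformity in $m$ reduces, via the translation $x \mapsto x - m\mathbf{1}$ that preserves the form (\ref{24}) of $\psi$ (with $\|\delta\psi(\cdot+m)\|_{C^2} = \|\delta\psi\|_{C^2}$), to the centered case $m = 0$. It then remains to verify the three hypotheses of the two-scale criterion with uniform constants.

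Hypotheses (i) and (iii) are immediate from the structural information already gathered. For (i), since $H(x) = \sum \psi(x_i)$, the Hessian $\operatorname{Hess} H$ is diagonal with entries $\psi''(x_i) = 1 + \delta\psi''(x_i)$ bounded by $1 + \|\delta\psi\|_{C^2}$, so the constant $\kappa$ appearing in (\ref{5}) is controlled in terms of $\psi$ alone. For (iii), the preceding lemma gives $\operatorname{Hess}\bar H \geq \lambda \operatorname{Id}_Y$ as soon as $K \geq K_0$; since $\bar\mu(dy) \propto \exp(-N\bar H(y))\,dy$, the Bakry--\'Emery criterion immediately yields LSI($\lambda N$) for $\bar\mu$, exactly the form needed.

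Hypothesis (ii) is the real heart of the matter. The conditional measure $\mu(dx|y)$ factors, over the $M$ blocks, as a product of single-block canonical ensembles $\mu_{K,y_j}$ on $X_{K,y_j}$; by tensorization of LSI it suffices to prove that each $\mu_{K,y_j}$ satisfies LSI with a constant $\rho > 0$ independent of $K$ and of $y_j$. A direct attack combining Bakry--\'Emery for the Gaussian quadratic part with a Holley--Stroock perturbation for the $\delta\psi$ contribution fails, since the oscillation of $\sum_i \delta\psi(x_i)$ grows linearly in $K$. The correct uniform LSI in this perturbed Gaussian canonical setting is precisely the Landim--Panizo--Yau estimate used in [GOVW], which I would invoke as a black box; this is the step where the nontrivial work is concentrated. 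Granted (i)--(iii) with constants depending only on $\psi$, the two-scale theorem then delivers LSI($\hat\rho$) for $\mu_{N,m}$, with $\hat\rho$ given by formula (\ref{7}), independent of $N$ and $m$.
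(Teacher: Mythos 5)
Your overall route --- apply the two-scale LSI criterion to the block decomposition with a fixed block size $K_0$, verify hypotheses (i) and (iii) from the boundedness of $\operatorname{Hess} H$ and from the convexity lemma combined with Bakry--\'Emery, and tensorize hypothesis (ii) over the blocks --- is exactly the route of [GOVW], which is the proof the paper is pointing to when it recalls this theorem. The reduction to $m=0$ by translation is correct (and in fact dispensable, since each verification is already uniform in $m$: the linear term produced by the translation is constant on the mean-zero hyperplane).

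The genuine problem is your treatment of hypothesis (ii), where you contradict your own setup. Having fixed $K = K_0$ in the first paragraph, you do not need an LSI constant for the single-block measures $\mu_{K,y_j}$ that is uniform in $K$; you only need one uniform in the block mean $y_j$ for the single fixed value $K = K_0$. The Bakry--\'Emery plus Holley--Stroock argument you dismiss is therefore exactly the right one: the standard Gaussian conditioned on the hyperplane $\{\sum_{i \leq K_0} x_i = K_0 y_j\}$ satisfies LSI($1$), and the perturbation $\sum_{i \leq K_0}\delta\psi(x_i)$ has oscillation at most $2K_0\|\delta\psi\|_{\infty}$, a fixed finite number, yielding LSI with constant $e^{-2K_0\|\delta\psi\|_{\infty}}$ uniformly in $y_j$. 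The linear-in-$K$ degradation you object to is precisely the price the two-scale method pays once and for all at the fixed scale $K_0$; the criterion then upgrades this crude constant to one uniform in $N$. Invoking Landim--Panizo--Yau as a black box here is both unnecessary and essentially circular: the uniform LSI for the canonical ensemble with a bounded perturbation of a Gaussian potential is the very statement being proved, and the point of the two-scale argument in [GOVW] is to obtain it without that machinery.
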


This result was recently extended in [MO] to the case where $\psi$ is a bounded perturbation of a uniformly convex function (rather than strictly quadratic), using a technique of iterated coarse-graining. 

We shall now present the Kawasaki dynamics for such a system of spins. We (arbitrarily) set the mean $m$ in the setting just explained to be $0$, and we consider a dynamics of the form described by (\ref{8}), with the matrix $A = (A_{ij})$ defined by 
\begin{equation} 
A_{ij} = N^2(-\delta_{i,j-1} + 2\delta_{i,j} - \delta_{i,j+1})
\end{equation}
We also identify the space $X_{N,0}$ with the space $\bar{X}$ of piecewise constant functions on $\mathbb{T} = \mathbb{R}/\mathbb{Z}$ :
$$\bar{X} = \left\{\bar{x} : \mathbb{T} \rightarrow \mathbb{R}; \hspace{1cm} \bar{x} \text{ is constant on } \left(\frac{j-1}{N},\frac{j}{N}\right], \hspace{0.5cm} j = 1,..,N\right\}$$
by associating to the vector $x \in X_{N,0}$ the function $\bar{x} \in \bar{X}$ such that
$$\bar{x}(\theta) = x_j, \hspace{2cm} \theta \in \left(\frac{j-1}{N},\frac{j}{N}\right].$$

To obtain the final hydrodynamic limit, we must embed all of these spaces $\bar{X}_N$ in a common functional space. We consider the space of functions $f : \mathbb{T} \rightarrow \mathbb{R}$ of locally integrable functions of mean zero, which we equip of the following norm: 
\begin{equation} ||f||_{H^{-1}}^2 = \int_{\mathbb{T}}{w^2(\theta)d\theta}; \hspace{1cm} w' = f, \hspace{0.5cm} \int_{\mathbb{T}}{w(\theta)d\theta} = 0.
\end{equation}
Then the closure of all the spaces $\bar{X}_N$ for this norm is the usual Sobolev space $\textsl{H}^{-1}(\mathbb{T})$. We can now formulate the following theorem on the hydrodynamic limit of the Kawasaki dynamics.

\begin{thm}
\label{71}
Assume that $\psi$ satisfies (\ref{24}). Let $f_N = f_N(t,x)$ be a time-dependent probability density on $(X_{N,0},\mu_{N,0})$ solving 
$$\frac{\partial}{\partial t}(f \mu_{N,0}) = \nabla \cdot (A \nabla f \mu_{N,0})$$
where $f_N(0,\cdot) = f_{0,N}(\cdot)$ satisfies
\begin{equation}
\int{f_{0,N}(x)\log f_{0,N}(x) \mu_{N,0}(dx)} \leq C N
\end{equation}
for some constant $C > 0$. Assume that 
\begin{equation}
\underset{N \uparrow \infty}{\lim} \int{||\bar{x} - \zeta_0||_{H^{-1}}^2 f_{0,N}(x)\mu_{N,0}(dx)} = 0
\end{equation}
for some $\zeta_0 \in \textsl{L}^2(\mathbb{T})$ which has mean zero. Then for any $T > 0$ we have 
\begin{equation}
\underset{N \uparrow \infty}{\lim} \hspace{2mm} \underset{0 \leq t \leq T}{\sup} \int{||\bar{x} - \zeta(t,\cdot)||_{H^{-1}}^2 f_{N}(t,x)\mu_{N,0}(dx)} = 0,
\end{equation}
where $\zeta$ is the unique weak solution of the nonlinear parabolic equation
\begin{equation} \label{28}
\frac{\partial \zeta}{\partial t} = \frac{\partial^2}{\partial \theta^2}\varphi'(\zeta)
\end{equation}
with initial condition $\zeta(0,\cdot) = \zeta_0(\cdot)$, where $\varphi$ is defined as in (\ref{27}).
\end{thm}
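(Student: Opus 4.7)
The plan is to reduce Theorem \ref{71} to the abstract hydrodynamic limit result Theorem \ref{10}, applied in the Kawasaki setup just described, and then identify the limit of the macroscopic ODE on $Y_{M,0}$ with the weak solution of the nonlinear PDE (\ref{28}). One instantiates $X = X_{N,0}$, $Y = Y_{M,0}$, $P = P_{N,K}$ with $A$ the scaled discrete Laplacian defined above, and chooses $K = K(N)$ so that $M = N/K \uparrow \infty$ and $N/M \uparrow \infty$ (for example $M \sim \sqrt{N}$). Macroscopic initial data $\eta_{0,N} \in Y_{M,0}$ are taken to be the block averages of an approximation of $\zeta_0$, which ensures $\Theta_N(0) \to 0$ thanks to the initial-data assumption $\int \|\bar{x} - \zeta_0\|_{H^{-1}}^2 f_{0,N}\mu_{N,0} \to 0$ and a triangle inequality in $H^{-1}$.

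Most of the hypotheses (i)--(vii) follow immediately from what has been collected. Hypothesis (i) holds since $\Hess H$ is diagonal with entries $1 + \delta\psi''(x_i)$, bounded by $1 + \|\delta\psi\|_{C^2}$. For (ii), $\mu(dx|y)$ factorizes as a product of canonical block ensembles $\mu_{K,y_j}$, each satisfying LSI with a constant independent of $K$ and $y_j$ by the LSI theorem recalled above; the product inherits this LSI. Hypothesis (iii) is the convexity lemma immediately preceding. Hypothesis (iv) follows from LSI and the quadratic growth of $\psi$. Hypothesis (v) comes from Proposition \ref{203}: uniform $C^2$ convergence $\psi_K \to \varphi$ together with strict convexity of $\varphi$ bound $\bar{H}$ from below uniformly. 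Hypothesis (vii) is partly given; the bound $\bar{H}(\eta_{0,N}) \leq C_2$ again uses $\psi_K \to \varphi$ and control of $\int \varphi(\zeta_0)$. The only hypothesis requiring actual work is (vi): a discrete Fourier analysis of $A = N^2(-\Delta_d)$ restricted to the zero-mean hyperplane shows that vectors in $\operatorname{Ran}(id_X - NP^tP)$, i.e.\ those orthogonal to each block average, satisfy $\langle x, Ax\rangle_X \gtrsim M^2 |x|^2$, which gives the claimed Poincar\'e-type estimate.

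Once Theorem \ref{10} applies, one obtains the uniform-in-time bound (\ref{eq_convergence}) for $\Theta_N(t)$ on $[0,T]$. The crucial identification is that, under the embedding into piecewise-constant functions on $\mathbb{T}$, the quantity $\frac{1}{N}\langle v, A^{-1} v\rangle_X$ coincides with $\|\bar{v}\|_{H^{-1}}^2$, since $A/N$ is precisely the standard discretization of $-\partial_\theta^2$. Hence (\ref{eq_convergence}) translates into $\int \|\bar{x} - \widehat{\eta}_N(t)\|_{H^{-1}}^2 f_N(t,x) \mu_{N,0}(dx) \to 0$ uniformly on $[0,T]$, where $\widehat{\eta}_N$ is the piecewise-constant extension of $NP^t\eta_N$ on $M$ blocks. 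The theorem then follows by a triangle inequality in $H^{-1}$, provided $\widehat{\eta}_N(t) \to \zeta(t)$ in $H^{-1}$ uniformly on $[0,T]$.

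This last convergence is the main obstacle: one must pass to the limit in the macroscopic ODE (\ref{9}) and identify its limit with the unique weak solution $\zeta$ of (\ref{28}). The ingredients will be the uniform convergence $\psi_K' \to \varphi'$ from Proposition \ref{203}, uniform $H^{-1}$-equicontinuity of $\widehat{\eta}_N$ in time coming from the energy decay $\bar{H}(\eta_N(t)) \leq \bar{H}(\eta_N(0)) \leq C_2$ together with dissipation, compactness of $\{\widehat{\eta}_N\}$ in $C([0,T]; H^{-1})$, and uniqueness of weak solutions of (\ref{28}) with initial datum $\zeta_0$. The spectral verification of (vi) is the only other step that requires genuine calculation; the remaining hypotheses are either assumed in the statement or follow directly from the lemmas recalled just above.
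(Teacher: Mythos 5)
Theorem \ref{71} is recalled from [GOVW] and is not proven in the present paper, and your outline reproduces exactly the strategy of that reference: verify hypotheses (i)--(vii) of the abstract Theorem \ref{10} for the Kawasaki data, identify the penalized quantity $\frac{1}{N}\langle \cdot, A^{-1}\cdot\rangle_X$ with the $H^{-1}$ norm of the step-function embedding, and combine the resulting uniform bound on $\Theta_N(t)$ with the convergence $\bar{\eta}_{\ell} \to \zeta$ in $L^{\infty}_t(H^{-1})$ (Proposition \ref{32}) via the triangle inequality. This is the same approach as the paper's source, and the two steps you single out as requiring genuine work --- the block Poincar\'e estimate for hypothesis (vi) and the compactness-plus-uniqueness passage from the macroscopic ODE (\ref{9}) to the weak solution of (\ref{28}) --- are indeed the substantive ones there.
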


In this theorem, a weak solution of (\ref{28}) is defined in the following way: 

\begin{defn}
We will call $\zeta = \zeta(t,\theta)$ a weak solution of (\ref{28})  on $[0,T] \times \mathbb{T}$ if
\begin{equation} \zeta \in L_t^{\infty}(L_{\theta}^2), \hspace{1cm} \frac{\partial\zeta}{\partial t} \in L_t^2(\textsl{H}_{\theta}^{-1}), \hspace{1cm} \varphi'(\zeta) \in L_t^2(L_{\theta}^2),
\end{equation}
and
\begin{equation} \left\langle \xi,\frac{\partial \zeta}{\partial t}\right\rangle_{H^{-1}} = -\int_{\mathbb{T}^1}{\xi \varphi'(\zeta) d\theta} \hspace{1cm} \text{ for all } \xi \in L^2\text{, for a.e. } t \in [0,T]
\end{equation}
\end{defn}

One of the main steps of the proof, which will also be used in this paper, is the convergence of $\bar{\eta}$ to $\zeta$: 
\begin{prop}
Let $\bar{\eta}_0^{\ell} \in \bar{Y}_{\ell}$ be a step function approximation of  $\zeta_0$, $\eta_0^{\ell}$ the vector of Y associated with it, and $\eta_{\ell}$ the solution of (\ref{9}) with initial condition $\eta_0^{\ell}$. Then the step functions $\bar{\eta}_{\ell}$ converge strongly in $\textsl{L}_t^{\infty}(\textsl{H}_{\theta}^{-1})$ to the unique weak solution of (\ref{28}) with initial condition $\zeta_0$.
\end{prop}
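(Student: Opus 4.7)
The plan is to prove this via the standard compactness route for discrete approximations of parabolic gradient flows, exploiting the gradient-flow structure of the macroscopic ODE together with Proposition \ref{203}.

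\textbf{Step 1 (identification of the equation and energy estimate).} Unwinding the definitions with the Kawasaki matrix $A$ and the block projection $P_{N,K}$, the operator $\bar A_\ell$ acts on $Y_\ell$ as $M^2$ times the discrete Laplacian, so (\ref{9}) reads coordinate-wise
\[
\dot\eta_\ell^j(t) = M^2\left(\psi_K'(\eta_\ell^{j+1}) - 2\psi_K'(\eta_\ell^j) + \psi_K'(\eta_\ell^{j-1})\right),
\]
i.e.\ a finite-difference discretization of $\partial_t\zeta = \partial_\theta^2\varphi'(\zeta)$. Since (\ref{9}) is the gradient flow of $\bar H_\ell$ in the $\bar A_\ell^{-1}$ metric,
\[
\bar H_\ell(\eta_\ell(t)) + \int_0^t \langle \nabla \bar H_\ell(\eta_\ell), \bar A_\ell \nabla \bar H_\ell(\eta_\ell)\rangle_Y\,ds = \bar H_\ell(\eta_0^\ell).
\]
By Proposition \ref{203}, $\psi_K$ is bounded below and grows quadratically uniformly in $K$ (for $K\geq K_0$), so this yields a uniform bound on $\bar\eta_\ell$ in $L_t^\infty(L_\theta^2)$ and, after translating the right-hand term into the language of step functions, a uniform bound on the (piecewise-affine interpolant of) $\psi_K'(\bar\eta_\ell)$ in $L_t^2(H_\theta^1)$.

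\textbf{Step 2 (compactness and passage to the limit).} From the equation itself, the $L_t^2(H_\theta^1)$ bound on $\psi_K'(\bar\eta_\ell)$ gives a uniform $L_t^2(H_\theta^{-1})$ bound on $\partial_t\bar\eta_\ell$. Aubin--Lions then produces a subsequence with $\bar\eta_\ell \to \zeta$ strongly in $L_t^2(H_\theta^{-1})$ and weakly-$*$ in $L_t^\infty(L_\theta^2)$, and by interpolation strongly in $L_t^2(L_\theta^2)$. The $C^2$ convergence $\psi_K\to\varphi$ from Proposition \ref{203}, combined with the strong $L_t^2(L_\theta^2)$ convergence of $\bar\eta_\ell$ and the $L_t^2(H_\theta^1)$ bound, then yields $\psi_K'(\bar\eta_\ell)\to\varphi'(\zeta)$ weakly in $L_t^2(L_\theta^2)$. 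Passing to the limit in the weak formulation of the discrete equation and handling the initial datum via the step-function approximation $\bar\eta_0^\ell\to\zeta_0$ in $L^2$, identifies $\zeta$ as a weak solution of (\ref{28}) with initial datum $\zeta_0$.

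\textbf{Step 3 (uniqueness and uniform-in-time convergence).} Uniqueness of weak solutions to $\partial_t\zeta = \partial_\theta^2\varphi'(\zeta)$ follows from the monotonicity of $\varphi'$: for two solutions $\zeta_1,\zeta_2$, a short computation gives $\tfrac{d}{dt}\|\zeta_1-\zeta_2\|_{H^{-1}}^2 = -2\int_{\mathbb T}(\varphi'(\zeta_1)-\varphi'(\zeta_2))(\zeta_1-\zeta_2)d\theta\leq 0$, so equal initial data force equal solutions. This pins down the limit and forces convergence of the full sequence. To upgrade from $L_t^2$ to $L_t^\infty(H_\theta^{-1})$, I would exploit the same $H^{-1}$ contraction at the discrete level (using the monotonicity of $\psi_K'$, which is uniform in $K$ large): this provides equicontinuity in time of $t\mapsto\bar\eta_\ell(t)$ in $H_\theta^{-1}$, uniformly in $\ell$, and combined with the already established convergence and Arzel\`a--Ascoli gives the uniform-in-time statement.

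The main obstacle, as expected for such discrete-to-continuum results, is Step 3: the upgrade to $L_t^\infty(H_\theta^{-1})$ relies on the $H^{-1}$-contraction property of the (discrete) flow, itself a consequence of the convexity of $\psi_K$ for $K$ large. The remaining subtlety is ensuring that the step-function embedding and the finite-difference operator $\bar A_\ell$ really do converge, in the sense of bilinear forms on $H^{-1}$, to the continuous Laplacian; this is handled by the standard discrete-to-continuous consistency for second-order finite differences on a uniform grid.
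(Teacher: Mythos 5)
The paper does not actually prove this proposition: it is quoted verbatim from [GOVW] (the text surrounding it reads ``The following proposition was proven in [GOVW]''), so there is no in-paper proof to compare against. Your proposal is a legitimate self-contained argument along the standard compactness/monotonicity lines, and it is close in spirit to the argument of [GOVW]: a priori (energy) estimates from the gradient-flow structure, Aubin--Lions compactness for the step-function embeddings, identification of the limit using the uniform $C^2$ convergence $\psi_K \to \varphi$, and uniqueness via the $H^{-1}$-contraction coming from the monotonicity of $\varphi'$.

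Two points in your write-up deserve more care. First, the opening claim of Step 1 --- that $\bar A_\ell$ is exactly $M^2$ times the discrete Laplacian on the coarse lattice, so that (\ref{9}) is literally a finite-difference scheme for $\partial_t\zeta=\partial^2_\theta\varphi'(\zeta)$ --- is false as stated: $\bar A^{-1}=PA^{-1}NP^t$ is the block average of the (nonlocal) inverse of the fine-lattice Laplacian, and it only agrees with $M^{-2}$ times the inverse coarse Laplacian asymptotically. You acknowledge this in your closing sentence, but it is not a routine consistency check; it is one of the genuinely technical lemmas of [GOVW] and the energy identity and the weak formulation you pass to the limit in both depend on it. Second, in Step 3 the equicontinuity in time of $t\mapsto\bar\eta_\ell(t)$ in $H^{-1}_\theta$ does not follow from the $H^{-1}$-contraction between two solutions (which compares different initial data, not different times); the correct source is the uniform bound on $\partial_t\bar\eta_\ell$ in $L^2_t(H^{-1}_\theta)$ that you already establish in Step 2, which gives uniform $1/2$-H\"older continuity in time; combined with the compact embedding $L^2_\theta\hookrightarrow H^{-1}_\theta$ and the uniform $L^\infty_t(L^2_\theta)$ bound, Arzel\`a--Ascoli then yields convergence in $C([0,T];H^{-1}_\theta)$, i.e.\ the claimed $L^\infty_t(H^{-1}_\theta)$ convergence. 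With these repairs the argument is sound.
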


We now state the result obtained when applying the previous abstract theorem to this setting.

\begin{thm} [Convergence of the entropy for Kawasaki dynamics] \label{30} 
Under the same assumptions as Theorem \textbf{\ref{71}}, the relative entropy with respect to the local Gibbs state goes to zeo, in a time-integrated sense : 
\begin{equation} \label{ent1}
\int_0^T{\int_{X_N}{\Phi\left(\frac{f_N(t,x)}{G_N(t,x)}\right)G_N(t,x) \mu_N(dx)}dt},
\end{equation}
where $G_N(t,\cdot)$ is the local Gibbs state given by $\eta_N(t)$. 
As a consequence, we have convergence of the microscopic entropy to the hydrodynamic entropy, in a time-integrated sense : 
\begin{equation} \label{ent2}
\int_0^T{\left|\frac{1}{N}\int{\Phi(f_N(t,x))\mu_N(dx)} - \left(\int_{\mathbb{T}}{\varphi(\zeta(\theta,t))d\theta} - \varphi\left(\int_{\mathbb{T}}{\zeta(t,\theta) d\theta}\right)\right)\right|dt} \underset{N \rightarrow \infty}{\rightarrow} 0.
\end{equation}
\end{thm}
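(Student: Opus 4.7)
The plan is to apply Corollary \textbf{\ref{31}} directly to the Kawasaki data $\{X_{N,0}, Y_{M,0}, N, P_{N,K}, A, \mu_{N,0}, f_{0,N}, \eta_{0,N}\}_N$, with $\eta_N(t)$ starting from a step-function approximation of $\zeta_0$. Once the hypotheses are verified, (a') gives (\ref{ent1}) directly, while (b') reduces (\ref{ent2}) to identifying the limit of $\bar H_N(\eta_N(t))$ with the right-hand side of (\ref{ent2}).

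Items (i)--(vii) of Theorem \textbf{\ref{10}} were already verified in [GOVW] with constants uniform in $N = KM$; the convexity input (iii) is the Lemma above. For item (viii), $A$ is $N^2$ times the discrete Laplacian on the $N$-periodic lattice, whose smallest nonzero eigenvalue is $N^2(2-2\cos(2\pi/N)) \to 4\pi^2$, giving a uniform $\tau > 0$. Item (ix) follows from (\ref{23}) and the uniform $C^2$ convergence of Proposition \textbf{\ref{203}}. For the scaling condition (\ref{602}), the inner product $|y|_Y^2 = M^{-1}\sum y_j^2$ yields $\Gamma(Y,|\cdot|_Y)^{1/(M-1)} = O(\sqrt M)$, so (\ref{602}) becomes $(M \log N)/N \to 0$, which is satisfied in the standard regime $M = N^a$ with $0 < a < 1$.

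Statement (\ref{ent1}) is then (a'). For (\ref{ent2}), (b') combined with the triangle inequality reduces matters to showing
$$
\int_0^T \Bigl| \bar H_N(\eta_N(t)) - \int_\T \varphi(\zeta(t,\theta))\,d\theta + \varphi\Bigl(\int_\T \zeta(t,\theta)\,d\theta\Bigr) \Bigr|\,dt \underset{N\to\infty}{\longrightarrow} 0.
$$
Using $\bar H_N(y) = M^{-1}\sum_j \psi_K(y_j) + N^{-1}\log \bar Z_N$, the step-function identification turns the first piece into $\int_\T \psi_K(\bar\eta_N(t,\theta))\,d\theta$. The last stated Proposition gives $\bar\eta_N \to \zeta$ strongly in $L^\infty_t(H^{-1}_\theta)$, and a uniform $L^\infty_t(L^2_\theta)$ bound on $\bar\eta_N$ comes from the macroscopic energy estimate. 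Combined with the $C^2$ uniform convergence $\psi_K \to \varphi$ of Proposition \textbf{\ref{203}}, these suffice to identify the limit of the first piece as $\int_\T \varphi(\zeta)\,d\theta$. The normalization term is handled by a Laplace-principle computation, again using $\psi_K \to \varphi$ in $C^2$: one obtains $N^{-1}\log \bar Z_N \to -\varphi(\int_\T \zeta\,d\theta)$ with error of order $(M \log N)/N$, which vanishes by (\ref{602}).

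The main obstacle is passing to the limit in the nonlinear functional $\int_\T \psi_K(\bar\eta_N)\,d\theta$, since $L^\infty_t(H^{-1}_\theta)$-convergence alone cannot accommodate a pointwise nonlinearity. The remedy is to split $\psi_K = \varphi + (\psi_K - \varphi)$ and use the $C^2$ smallness of $\psi_K - \varphi$ to absorb the first error, then to exploit the $L^\infty_t(L^2_\theta)$ boundedness together with compactness properties inherited from the macroscopic equation (\ref{9}) to upgrade $\bar\eta_N \to \zeta$ to a mode of convergence strong enough to pass through $\varphi$.
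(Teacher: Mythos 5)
Your overall architecture coincides with the paper's: verify (viii) from the spectrum of the discrete Laplacian, (ix) from the uniform $C^2$ convergence $\psi_K\to\varphi$ together with the two-sided bounds on $\varphi''$, compute $\Gamma(Y,|\cdot|_Y)=(\sqrt{2\pi M})^{M-1}$ to get (\ref{602}), invoke Corollary \ref{31} so that (a') gives (\ref{ent1}), and reduce (\ref{ent2}) via (b') to the convergence of $\bar H_N(\eta_N(t))$, split into $\int_{\mathbb{T}}\psi_K(\bar\eta_N)\,d\theta$ plus the normalization term $N^{-1}\log\bar Z$, the latter handled by a Laplace-type computation around the minimizer $y_1=\dots=y_M=m$. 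All of this matches the paper.

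The genuine gap is exactly at the point you yourself label ``the main obstacle''. You correctly observe that $L^\infty_t(H^{-1}_\theta)$ convergence cannot be pushed through the pointwise nonlinearity $\varphi$, but your proposed remedy --- upgrading $\bar\eta_\ell\to\zeta$ by ``compactness properties inherited from the macroscopic equation'' --- is a declaration of intent, not an argument: no equicontinuity or Aubin--Lions-type estimate uniform in $\ell$ is produced, and nothing you have assembled yields strong $L^2_{t,\theta}$ convergence of the step functions $\bar\eta_\ell$. The paper needs no such upgrade. It writes the two-sided convexity estimate
$$\varphi'(\zeta)(\bar\eta_\ell-\zeta)\;\le\;\varphi(\bar\eta_\ell)-\varphi(\zeta)\;\le\;\varphi'(\zeta)(\bar\eta_\ell-\zeta)+\frac{\Lambda}{2}|\bar\eta_\ell-\zeta|^2,$$
which reduces the problem to the linear pairing, and then controls that pairing by $H^1$--$H^{-1}$ duality:
$$\int_0^T\!\!\int_{\mathbb{T}}\bigl|\varphi'(\zeta)(\bar\eta_\ell-\zeta)\bigr|\,d\theta\,dt\le\Bigl(\int_0^T\|\varphi'(\zeta(t))\|_{H^1}^2dt\Bigr)^{1/2}\Bigl(\int_0^T\|\bar\eta_\ell(t)-\zeta(t)\|_{H^{-1}}^2dt\Bigr)^{1/2},$$
which vanishes precisely because $\varphi'(\zeta)\in L^2_t(H^1_\theta)$ (part of the definition of a weak solution, reinforced by Proposition \ref{reg_hydro}) and $\bar\eta_\ell\to\zeta$ in $L^\infty_t(H^{-1}_\theta)$. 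This duality step is the missing device that converts the weak convergence you actually have into convergence of the nonlinear functional; without it, or a genuinely established compactness substitute, your proof of (\ref{ent2}) is incomplete. A minor further remark: restricting to the ``standard regime $M=N^a$'' to get $(M\log N)/N\to0$ quietly adds a hypothesis not present in the theorem; if you need it, it must be stated.
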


Moreover, in this setting, we will be able to get a pointwise convergence of the entropy, as long as we stay away from the origin. It will follow from the time-integrated convergence and the fact that the entropy is decreasing in time.

\begin{thm}[Pointwise convergence of the relative entropy] \label{77}
Assume that $\zeta$ is continuous in both variables. Let $0 < \epsilon < T$. Then 
$$\frac{1}{N}\int{\Phi(f_N(t,x))\mu_N(dx)} \underset{N \rightarrow \infty}{\longrightarrow} \int_{\mathbb{T}}{\varphi(\zeta(\theta,t))d\theta} - \varphi\left(\int_{\mathbb{T}}{\zeta(t,\theta) d\theta}\right)$$
uniformly on $[\epsilon, T]$.
\end{thm}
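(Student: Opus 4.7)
The plan is to combine two ingredients: the time-integrated convergence of entropies furnished by Theorem \ref{30} (in particular (\ref{ent2})), and the fact that the microscopic entropy is monotone in time along the Kawasaki dynamics. Write
$$E_N(t) := \frac{1}{N}\int \Phi(f_N(t,x))\mu_N(dx), \qquad E_\infty(t) := \int_{\mathbb{T}}\varphi(\zeta(t,\theta))\,d\theta - \varphi\!\left(\int_{\mathbb{T}}\zeta(t,\theta)\,d\theta\right).$$
The first observation I would record is that $E_N$ is a non-increasing function of $t$: differentiating along (\ref{8}) and using the weak formulation gives $\frac{d}{dt}\int \Phi(f)\,d\mu = -\int f^{-1}\langle \nabla f, A\nabla f\rangle d\mu \leq 0$ since $A$ is positive definite. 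The second is that $E_\infty$ is continuous on $[\epsilon,T]$: the mean $\int_{\mathbb{T}}\zeta(t,\theta)d\theta$ is conserved by (\ref{28}), so the subtracted constant $\varphi(\bar\zeta)$ is actually independent of $t$, while the continuity of $(t,\theta)\mapsto \zeta(t,\theta)$ together with the smoothness of $\varphi$ (Proposition \ref{203}) makes $t\mapsto \int \varphi(\zeta(t,\cdot))$ continuous by dominated convergence.

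Next I would derive pointwise convergence $E_N(t) \to E_\infty(t)$ for every $t \in (0,T]$ by the standard monotone sandwich trick. For $t \in [\epsilon,T]$ and $0 < h \leq \min(t,T-t)$, monotonicity of $E_N$ yields
$$\frac{1}{h}\int_t^{t+h} E_N(s)\,ds \;\leq\; E_N(t) \;\leq\; \frac{1}{h}\int_{t-h}^{t} E_N(s)\,ds.$$
Sending $N\to\infty$ and using (\ref{ent2}) replaces $E_N$ by $E_\infty$ on both sides; sending $h\to 0$ and using continuity of $E_\infty$ (or Lebesgue differentiation) pinches both sides to $E_\infty(t)$. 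This gives pointwise convergence on $(0,T]$.

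To upgrade to uniform convergence on $[\epsilon,T]$, I would use that $E_\infty$ is uniformly continuous on this compact interval together with the monotonicity of $E_N$. Given $\eta>0$, pick $\delta>0$ with $|E_\infty(s)-E_\infty(t)|<\eta$ whenever $|s-t|<\delta$, and a finite grid $\epsilon = t_0 < t_1 < \dots < t_K = T$ with mesh less than $\delta$. By the pointwise convergence above, take $N$ large so that $|E_N(t_j)-E_\infty(t_j)|<\eta$ for every $j$. For any $t\in[t_j,t_{j+1}]$, monotonicity gives
$$E_\infty(t) - 2\eta \leq E_\infty(t_{j+1}) - \eta \leq E_N(t_{j+1}) \leq E_N(t) \leq E_N(t_j) \leq E_\infty(t_j) + \eta \leq E_\infty(t) + 2\eta,$$
so $\sup_{t\in[\epsilon,T]}|E_N(t)-E_\infty(t)| \leq 2\eta$ for all large $N$.

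I do not expect a serious obstacle here; the only point that deserves care is the verification that $E_N$ really is monotone (which reduces to the positive-definiteness of $A$ and the weak formulation of (\ref{8})) and that $E_\infty$ is continuous, which follows from the assumed joint continuity of $\zeta$ combined with the conservation of mean by the nonlinear parabolic equation (\ref{28}). The rest is the standard Dini-type argument exploiting monotonicity of $E_N$ and uniform continuity of the limit.
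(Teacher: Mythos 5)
Your proposal is correct and follows essentially the same route as the paper: monotonicity of the microscopic entropy (the H-theorem), the averaging sandwich $\frac{1}{h}\int_t^{t+h}E_N \le E_N(t)\le \frac{1}{h}\int_{t-h}^{t}E_N$ combined with the time-integrated convergence of Theorem \ref{30} to get pointwise convergence, and then the monotonicity-plus-continuity-of-the-limit upgrade to uniform convergence on $[\epsilon,T]$. The only cosmetic difference is that you prove the final uniformity step by hand with a grid argument, whereas the paper simply invokes Dini's second theorem.
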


\begin{rmq} This convergence will in general not hold true at initial time, since no relation is assumed between the initial microscopic entropy and the initial hydrodynamic entropy. However, if it does hold true initially, then it will hold true uniformly on $[0,T]$ for any $T > 0$. (This is the main outcome of Yau's entropy method.)
\end{rmq}

Since we do not necessarily assume our initial data to be smooth, $\zeta$ is not in general smooth at $t = 0$. However, as long as $\zeta_0$ lies in $L^2(\mathbb{T})$, $\zeta$ will satisfy the smoothness assumptions of Theorem \ref{77}:

\begin{prop} \label{reg_hydro}
Assume $\varphi$ is a $C^3$ function, with $\varphi'' \geq \lambda > 0$, $||\varphi''||_{\infty} < \infty$ and $||\varphi^{(3)}/\varphi''||_{\infty} < \infty$. Let $\zeta$ the weak solution of 
$$\frac{\partial \zeta}{\partial t} = \frac{\partial^2}{\partial \theta^2}\varphi'(\zeta)$$
with initial data $\zeta_0 \in L^2(\mathbb{T})$. Then, for any $\epsilon > 0$, $\zeta$ lies in $C^{1,2}([\epsilon, T] \times \mathbb{T})$, and $\frac{\partial}{\partial t}\varphi'(\zeta)$ and $\frac{\partial^2}{\partial \theta^2}\varphi'(\zeta)$ are uniformly continuous.
\end{prop}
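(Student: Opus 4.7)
The plan is to treat the equation as uniformly parabolic in divergence form, $\partial_t \zeta = \partial_\theta(\varphi''(\zeta) \partial_\theta \zeta)$, and apply the standard smoothing-plus-bootstrap machinery for quasilinear parabolic equations. The assumption $\lambda \leq \varphi'' \leq \|\varphi''\|_\infty$ provides uniform parabolicity once $\zeta$ is bounded, while $\|\varphi^{(3)}/\varphi''\|_\infty < \infty$ supplies the $C^1$ regularity of the coefficient needed to close a Schauder bootstrap.

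First I would derive the basic energy estimate by testing the weak formulation against $\zeta$ itself, obtaining $\zeta \in L^\infty_t(L^2(\mathbb{T})) \cap L^2_t(H^1(\mathbb{T}))$ on $[0,T]$ thanks to $\varphi'' \geq \lambda$. The next step is the most delicate: upgrading to $L^\infty$ regularity for every $t > 0$. I would run a Moser-type iteration, testing against $\zeta^{2k-1}$ for $k = 2^n$ combined with a time weight $t^{\alpha_n}$, and invoke the one-dimensional Gagliardo--Nirenberg inequality to close the recursion; this yields an $L^\infty$ bound on $\zeta$ over $[\epsilon/2, T] \times \mathbb{T}$ depending only on $\|\zeta_0\|_{L^2}$, $\lambda$, $T$ and $\epsilon$.

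Once $\zeta$ is bounded, the coefficient $\varphi''(\zeta(t,\theta))$ is measurable with values in $[\lambda,\|\varphi''\|_\infty]$, so the De Giorgi--Nash--Moser theorem for divergence-form parabolic equations yields local H\"older continuity of $\zeta$ on $(\epsilon/2, T] \times \mathbb{T}$ with some exponent $\alpha \in (0,1)$. Then both $\varphi''(\zeta)$ and $\varphi^{(3)}(\zeta)$ are H\"older in $(t,\theta)$, and rewriting the equation in non-divergence form,
\[ \partial_t \zeta - \varphi''(\zeta)\,\partial_\theta^2 \zeta = \varphi^{(3)}(\zeta)\,(\partial_\theta \zeta)^2, \]
parabolic Schauder estimates applied on interior cylinders give $\zeta \in C^{1+\alpha/2,\,2+\alpha}([\epsilon,T]\times\mathbb{T})$, and in particular $\zeta \in C^{1,2}$. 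The chain rule identities $\partial_t \varphi'(\zeta) = \varphi''(\zeta)\,\partial_t \zeta$ and $\partial_\theta^2 \varphi'(\zeta) = \varphi''(\zeta)\,\partial_\theta^2 \zeta + \varphi^{(3)}(\zeta)\,(\partial_\theta \zeta)^2$, together with the $C^3$ hypothesis on $\varphi$ and compactness of $[\epsilon,T]\times\mathbb{T}$, then yield the claimed uniform continuity.

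The main obstacle I expect is the $L^\infty$ smoothing step: passing from $L^2$ initial data to pointwise bounds for $t>0$ while retaining dependence only on $\|\zeta_0\|_{L^2}$ and $\epsilon$. Everything after it is a routine application of classical quasilinear parabolic theory (as in, e.g., Ladyzhenskaya--Solonnikov--Ural'tseva), but the Moser iteration with $t$-weights is the genuinely technical ingredient. As an alternative one can appeal to the ultracontractivity of the nonlinear semigroup associated with the equation as a black-box, since $\varphi''$ is pinched between two positive constants.
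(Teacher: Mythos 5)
Your strategy is sound and genuinely different from the paper's. You go through the full De Giorgi--Nash--Moser machinery for divergence-form parabolic equations: Moser iteration with time weights for an $L^\infty$ smoothing bound, then interior H\"older continuity of $\zeta$ with merely measurable pinched coefficients, then Schauder. The paper instead stays entirely within elementary energy estimates exploiting the one-dimensional periodic structure: after your first energy identity it differentiates $\int(\partial_\theta\varphi'(\zeta))^2$ and $\int(\partial^2_\theta\varphi'(\zeta))^2$ in time, closes the resulting differential inequalities using the Poincar\'e inequality on mean-zero functions, a Gagliardo--Nirenberg-type $L^4$ interpolation (Lemma 3.3), and Young's inequality (this is where $\|\varphi^{(3)}/\varphi''\|_\infty<\infty$ enters), and thereby obtains a uniform $H^2$ bound on $\varphi'(\zeta(t,\cdot))$ for $t\in[\epsilon,T]$ with the explicit decay $\int(\partial_\theta\varphi'(\zeta(t)))^2\le (C/\epsilon)e^{-2\lambda\pi^2(t-\epsilon)}$. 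Only then does it invoke $H^2(\mathbb{T})\hookrightarrow C^{1+\alpha}(\mathbb{T})$ and linear parabolic Schauder theory. The paper's route is more self-contained and, importantly, its quantitative time-uniform $H^1$ bound on $\varphi'(\zeta)$ is reused elsewhere (to upgrade the time-integrated entropy convergence to uniform-in-time convergence on $[\epsilon,T]$); your black-box route proves the regularity statement but would not by itself hand you those explicit bounds. Note also that uniform parabolicity is automatic from $\lambda\le\varphi''\le\|\varphi''\|_\infty$ and does not require first bounding $\zeta$.

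One step in your plan is compressed to the point of being a gap as written: you pass from H\"older continuity of $\zeta$ directly to Schauder estimates for $\partial_t\zeta-\varphi''(\zeta)\partial^2_\theta\zeta=\varphi^{(3)}(\zeta)(\partial_\theta\zeta)^2$, but Schauder requires the right-hand side to be H\"older, and at that stage you only control $\partial_\theta\zeta$ in $L^2_{t,\theta}$. You need an intermediate gradient H\"older estimate (the $C^{1,\alpha}$ theory for divergence-form quasilinear equations, or a $W^{2,p}$/Calder\'on--Zygmund bootstrap, both in Ladyzhenskaya--Solonnikov--Ural'tseva) before the non-divergence Schauder step applies. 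The paper avoids this issue because its $H^2$ bound on $\varphi'(\zeta)$ already yields $\partial_\theta\zeta\in C^\alpha$ via Sobolev embedding before the linear theory is invoked. With that intermediate step supplied, your argument goes through.
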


This result is well-known in the PDE community, we give a proof for the sake of completeness.

\section{Proof of Theorem \ref{4}}

Let us first state some properties of the Local Gibbs state, which we shall use for the proof.

\begin{prop}[Study of the local Gibbs state] \label{11}
(i) At the macroscopic scale the density $\bar{G}\bar{\mu}$ is given by 
$$\bar{G}(y)\bar{\mu}(dy) = \frac{1}{\bar{Z}}\exp\left(N(\nabla \bar{H}(\eta) \cdot y - \bar{H}(y))\right)dy.$$

(ii) At the macroscopic scale, we have the following bound of the Wasserstein distance between the local Gibbs state given by $\bar{G}(\cdot)d\bar{\mu}$ and $\delta_{\eta}$ : 
$$\int{|y - \eta|_Y^2\bar{G}(y)\bar{\mu}(dy)} \leq \frac{M}{\lambda N}.$$

(iii) The free energy associated with $G^{\eta}$ is close to the energy associated with $\eta$, with the explicit bound 
\begin{align}
&\left|\frac{1}{N}\int{\Phi(G^{\eta})d\mu} - \bar{H}(\eta)\right| \notag \\
&\leq \frac{(M-1)}{2N}\max\left(\left|\log\left(\frac{\Gamma(Y,|\cdot|_Y)^{2/(M-1)}}{\Lambda N}\right)\right|,\left|\log\left(\frac{\Gamma(Y,|\cdot|_Y)^{2/(M-1)}}{\lambda N}\right)\right|\right) \notag \\
& + \sqrt{\frac{M}{\lambda N}} |\nabla \bar{H}(\eta)|. \notag
\end{align}
\end{prop}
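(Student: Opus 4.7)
The plan is to handle the three items in turn, with the bulk of the analytic work concentrated in (iii).

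For (i), I would first note that the vector $\vec{\lambda} = NP^t \nabla \bar{H}(\eta)$ lies in the range of $P^t$, so that $\vec{\lambda} \cdot x = N \langle \nabla \bar{H}(\eta), Px \rangle_Y$ depends on $x$ only through the macroscopic profile $y = Px$. Pushing forward the identity $G^{\eta}\mu = Z^{-1} e^{\vec{\lambda} \cdot x} \mu$ under $P$ therefore yields $\bar{G}(y) \bar{\mu}(dy) = \bar{Z}^{-1} \exp(N \nabla \bar{H}(\eta) \cdot y) \bar{\mu}(dy)$, and substituting $\bar{\mu}(dy) = e^{-N\bar{H}(y)} dy$ gives the claimed density.

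For (ii), I would introduce the convex function $\Psi(y) := \bar{H}(y) - \nabla \bar{H}(\eta) \cdot y$, so that by (i) we have $\bar{G}\bar{\mu} \propto e^{-N\Psi} dy$, $\nabla_Y \Psi(\eta) = 0$, and $\operatorname{Hess}_Y \Psi \geq \lambda \operatorname{Id}_Y$ by assumption (iii) of Theorem \ref{10}. Working in an orthonormal basis of $(Y, \langle \cdot, \cdot \rangle_Y)$, a single integration by parts gives $\int y_i \partial_i \Psi \, d\bar{G}\bar{\mu} = 1/N$ for each coordinate; summing and using $\int \nabla \Psi \, d\bar{G}\bar{\mu} = 0$ (another integration by parts) produces
\[
\int \langle y - \eta, \nabla \Psi(y) \rangle_Y \, \bar{G}\bar{\mu}(dy) = \frac{M-1}{N}.
\]
On the other hand, integrating $\operatorname{Hess} \Psi \geq \lambda \operatorname{Id}_Y$ along the segment from $\eta$ to $y$ yields the pointwise inequality $\langle y - \eta, \nabla \Psi(y) \rangle_Y \geq \lambda |y - \eta|_Y^2$, and combining the two identities gives the claimed bound (with the slightly loose constant $M/(\lambda N)$ in place of $(M-1)/(\lambda N)$).

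For (iii), I would expand
\[
\frac{1}{N}\int \Phi(G^{\eta}) \, d\mu = \int \nabla \bar{H}(\eta) \cdot y \, \bar{G}\bar{\mu}(dy) - \frac{1}{N}\log Z,
\]
using $\vec{\lambda} \cdot x = N \nabla \bar{H}(\eta) \cdot Px$ and $\int G^{\eta} d\mu = 1$. Writing the first term as $\nabla \bar{H}(\eta) \cdot \eta + \nabla \bar{H}(\eta) \cdot (\bar{y} - \eta)$, with $\bar{y} := \int y \, \bar{G}\bar{\mu}(dy)$, Cauchy--Schwarz combined with Jensen's inequality and part (ii) produces the $\sqrt{M/(\lambda N)} \, |\nabla \bar{H}(\eta)|$ contribution. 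For the deterministic part $-\Psi(\eta) - (1/N) \log Z$, I would use the two-sided Hessian bound $\lambda \operatorname{Id} \leq \operatorname{Hess} \Psi \leq \Lambda \operatorname{Id}$ (assumption (iii) of Theorem \ref{10} together with (ix) of Theorem \ref{4}) to sandwich $\Psi(\eta) + (\lambda/2)|y - \eta|_Y^2 \leq \Psi(y) \leq \Psi(\eta) + (\Lambda/2)|y-\eta|_Y^2$, and then bracket the partition function via the explicit Gaussian identity $\int e^{-c|y-\eta|_Y^2/2} dy = c^{-(M-1)/2} \Gamma(Y,|\cdot|_Y)$:
\[
(N\Lambda)^{-(M-1)/2} \Gamma(Y,|\cdot|_Y) e^{-N\Psi(\eta)} \leq Z \leq (N\lambda)^{-(M-1)/2} \Gamma(Y,|\cdot|_Y) e^{-N\Psi(\eta)}.
\]
Taking logarithms, dividing by $N$, and recalling $-\Psi(\eta) = \nabla \bar{H}(\eta) \cdot \eta - \bar{H}(\eta)$ produces exactly the first (deterministic) term in the stated bound.

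The main obstacle, in my view, is the bound in (ii): a direct appeal to the Brascamp--Lieb variance inequality would control $\int |y - \bar{y}|_Y^2 \, d\bar{G}\bar{\mu}$ with $\bar{y}$ the mean of $\bar{G}\bar{\mu}$, and one would then have to argue separately that $\bar{y}$ is close to the mode $\eta$. The integration-by-parts identity combined with the convexity lower bound on $\langle y - \eta, \nabla \Psi(y) \rangle_Y$ sidesteps this issue cleanly, at the cost only of replacing $\dim Y = M - 1$ by $M$ in the final constant.
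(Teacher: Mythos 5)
Your proposal is correct and follows essentially the same route as the paper: (ii) is proved there via a lemma (attributed to Varadhan) whose content is exactly your integration-by-parts identity combined with the pointwise bound $\langle y-\eta,\nabla\Psi(y)\rangle_Y\geq\lambda|y-\eta|_Y^2$ for the tilted potential centered at its minimizer $\eta$, and (iii) is handled by the same split into a deterministic log-partition-function term (bracketed by the two Gaussian integrals via $\lambda\,\mathrm{Id}\leq\operatorname{Hess}\bar H\leq\Lambda\,\mathrm{Id}$) plus a fluctuation term controlled by Cauchy--Schwarz and (ii).
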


\begin{rmq} We can use the same techniques as in [GOVW] to pass from macroscopic to microscopic scale, and deduce from (ii) a bound on the penalized Wasserstein distance 

$\frac{1}{N}W_{A^{-1}}(Gd\mu, \delta_{NP^t\eta(t)}) = \frac{1}{N}\int{\langle A^{-1}(x - NP^t\eta), x - NP^t\eta \rangle \hspace{0.1cm} G(x)\mu(dx)}$.
\end{rmq}

Part (iii) will be how we deduce the convergence of the entropy from the local Gibbs behavior. The proof of this proposition will hinge on the following lemma, which tells us that among all the probability measures on $\mathbb{R}^M$ of the form $\exp(-f(x))dx$, with $\text{Hess} f \geq \lambda Id$ and where $f$ reaches its minimum in 0, the one with the highest second moment is the centered Gaussian of covariance matrix $\lambda \hspace{0.1cm} id$.

\begin{lem} \label{1}
If $f : \mathbb{R}^M \rightarrow \mathbb{R}$ is $C^2$ and uniformly convex, with $\operatorname{Hess } f \geq \lambda \text{Id}$, $\lambda > 0$, and $\min f = f(0)$, then 
$$\int{|x|^2e^{-f(x)}dx} \leq \frac{M}{\lambda}\int{e^{-f(x)}dx}.$$
\end{lem}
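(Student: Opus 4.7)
The plan is to compare the two integrals via an integration by parts against the vector field $V(x) = x$, then exploit the uniform convexity assumption together with $\nabla f(0)=0$.

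First, I would note that since $\min f = f(0)$ we have $\nabla f(0)=0$, and from $\operatorname{Hess} f \geq \lambda \operatorname{Id}$ we obtain the pointwise lower bound $f(x) \geq f(0) + \frac{\lambda}{2}|x|^2$, which guarantees that $e^{-f(x)}$ and $x \, e^{-f(x)}$ decay faster than any polynomial at infinity. In particular, all the integrals below are finite and there are no boundary terms at infinity.

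Next, I would compute, using the identity $\nabla \cdot (x \, e^{-f(x)}) = M e^{-f(x)} - x \cdot \nabla f(x) e^{-f(x)}$ and the divergence theorem (with vanishing boundary contribution thanks to the Gaussian-type decay):
\begin{equation*}
\int x \cdot \nabla f(x) \, e^{-f(x)} \, dx \;=\; M \int e^{-f(x)} \, dx.
\end{equation*}

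Finally, to estimate $x \cdot \nabla f(x)$ from below, I would write
\begin{equation*}
\nabla f(x) \;=\; \nabla f(x) - \nabla f(0) \;=\; \int_0^1 \operatorname{Hess} f(tx) \cdot x \, dt,
\end{equation*}
so that, using $\operatorname{Hess} f \geq \lambda \operatorname{Id}$,
\begin{equation*}
x \cdot \nabla f(x) \;=\; \int_0^1 \bigl\langle x, \operatorname{Hess} f(tx) \, x \bigr\rangle \, dt \;\geq\; \lambda |x|^2.
\end{equation*}
Combining the two displays yields
\begin{equation*}
\lambda \int |x|^2 e^{-f(x)} dx \;\leq\; \int x \cdot \nabla f(x) \, e^{-f(x)} dx \;=\; M \int e^{-f(x)} dx,
\end{equation*}
which is the desired inequality. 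There is no serious obstacle: the only point requiring some care is justifying the integration by parts, and this is immediate from the quadratic lower bound on $f$.
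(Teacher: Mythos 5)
Your proof is correct and follows essentially the same route as the paper: both arguments reduce to the pointwise bound $\langle x, \nabla f(x)\rangle \geq \lambda |x|^2$ (you obtain it by integrating the Hessian along the segment from $0$ to $x$, the paper by observing that $t \mapsto f(tx) - \tfrac{\lambda}{2}|tx|^2$ is convex with vanishing derivative at $t=0$, which is the same computation), followed by the same integration by parts giving $\int \langle x, \nabla f(x)\rangle e^{-f}\,dx = M\int e^{-f}\,dx$. Your explicit justification of the decay needed for the integration by parts is a welcome addition that the paper leaves implicit.
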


The following proof of this lemma was pointed out to us by S.R.S. Varadhan.

\begin{proof} 
Fix $x \in \mathbb{R}^M$. The function $g(t) = f(tx) - \frac{\lambda}{2}||tx||_2^2$ is convex, and reaches its minimum for $t = 0$. Therefore, we have $g'(1) \geq g'(0) = 0$. Since $g'(1) = \langle x, \nabla f (x)\rangle - \lambda |x|^2$, we obtain 
$$\lambda \int{|x|^2e^{-f(x)}dx} \leq \int{\langle x, \nabla f (x)\rangle e^{-f(x)}dx}$$
and, by integration by parts, the term on the right-hand side is equal to $M \int{e^{-f(x)}dx}$, which concludes the proof.

\end{proof}

\noindent This lemma will allow us to bound the Wasserstein distance in (ii).

\begin{proof} [Proof of Proposition \ref{11}]

\noindent The proof of (i) is trivial, since we have constructed our local Gibbs state such that $G(x)$ actually only depends on $Px$. For all $t$, since $y \rightarrow \psi_t(y) = \bar{H}(y) - \nabla \bar{H}(\eta(t)) \cdot y$ is uniformly convex, with its Hessian bound below by $\lambda Id$, and reaches its minimum for $y = \eta(t)$, applying lemma \ref{1} with $f = N\psi_t$, after translating by $\eta(t)$, we obtain 
\begin{align}
\int{|y - \eta|_Y^2\bar{G}(y)\bar{\mu}(dy)} &= \frac{1}{\bar{Z}}\int{|y - \eta|_Y^2 e^{-N\psi_t(y)}dy} \notag \\
&\leq \frac{M \bar{Z}}{\lambda N \bar{Z}} 
\end{align}
which yields (ii).

\noindent For (iii), we have 
\begin{align} \label{200}
&\left|\frac{1}{N}\int{\Phi(G^{\eta})d\mu} - \bar{H}(\eta)\right| = \left|\frac{1}{N}\int{(N\nabla \bar{H}(\eta) \cdot y - \log \bar{Z})\bar{G}^{\eta}(y)\bar{\mu}(dy)} - \bar{H}(\eta)\right| \notag \\
&\hspace{0.1cm} \leq \left|-\frac{1}{N}\log \bar{Z} - \bar{H}(\eta) + \nabla \bar{H}(\eta)\cdot \eta \right| + \left|\frac{1}{N}\int{\nabla \bar{H}(\eta)\cdot (y - \eta)\bar{G}(y)\bar{\mu}(dy)}\right|
\end{align}

Since $\lambda \operatorname{Id} \leq \operatorname{Hess} \bar{H} \leq \Lambda \operatorname{Id}$, we have the bounds
\begin{align} \frac{\lambda}{2}|y - \eta|^2 &\leq -\nabla \bar{H}(\eta) \cdot (y - \eta) - \bar{H}(\eta) + \bar{H}(y) \notag \\
&\leq \frac{\Lambda}{2}|y - \eta|^2 \notag
\end{align}
for all $y \in Y$. We now multiply by $N$ and integrate. We obtain the upper bound 
\begin{align}
-\frac{1}{N}\log \bar{Z} - &\bar{H}(\eta) + \nabla \bar{H}(\eta) \cdot \eta \notag \\
&= -\frac{1}{N}\log\int{\exp(N(\nabla \bar{H}(\eta)\cdot y - \bar{H}(y)))dy} - \bar{H}(\eta) + \nabla \bar{H}(\eta) \cdot \eta \notag \\
&= -\frac{1}{N}\log\int{\exp(N(\nabla \bar{H}(\eta)\cdot (y - \eta) - (\bar{H}(y)) - \bar{H}(\eta)))dy} \notag \\
&\leq -\frac{1}{N}\log \int{\exp(-\frac{\Lambda N}{2}|y - \eta|_Y^2)dy} \notag \\
&= -\frac{1}{N}\log\left(\left(\frac{1}{\sqrt{\Lambda N}}\right)^{M-1} \int{\exp(-|y|_Y^2/2)dy}\right) \notag \\
&= -\frac{M-1}{2N}\log\left(\frac{\Gamma(Y,|\cdot |_Y)^{2/(M-1)}}{\Lambda N}\right) \notag
\end{align}
In the same way, we obtain a lower bound, so that when we take the absolute value we get the bound

$$\left|\frac{1}{N}\log \bar{Z} - \bar{H}(\eta) + \nabla \bar{H}(\eta) \cdot \eta \right|$$ 
\begin{equation}\leq \frac{M-1}{2N}\max\left(\left|\log\left(\frac{\Gamma(Y,|\cdot |_Y)^{2/(M-1)}}{\lambda N}\right)\right|\left|\log\left(\frac{\Gamma(Y,|\cdot |_Y)^{2/(M-1)}}{\Lambda N}\right)\right|\right).
\end{equation}
Combined with (\ref{200}), this means the final element we need is a bound on $\int{\nabla \bar{H}(\eta) \cdot (y - \eta) \bar{G}^{\eta}(y)\bar{\mu}(dy)}$, 
and simply using the Cauchy-Schwartz inequality and (ii) gives us the desired result.

\end{proof}

We shall now use these properties of the local Gibbs State to prove Theorem \textbf{\ref{4}}.

\begin{proof} [Proof of Theorem \ref{4}]

We shall divide this proof into three steps : first we shall reduce the problem to the study of the time-integrated relative entropy between the macroscopic state $\bar{f}(t, \cdot)\bar{\mu}$ and the local Gibbs state $\bar{G}^{\eta(t)}\bar{\mu}$, 
then we shall use the HWI interpolation inequality to show that this relative entropy goes to $0$. 
Finally, we shall reintroduce the microscopic terms to get the full bound (a). Then (b) shall follow, since we have already proved in Proposition \ref{1} that the free energy associated with $G^{\eta}$ is asymptotically close to $\bar{H}(\eta(t))$.

\vspace{0.2cm}

\textbf{Step 1 :} Let us consider
\begin{align} \label{50}
\frac{1}{N}H_N(t) &= \int{\Phi\left(\frac{f(t,x)}{G(t,x)}\right)G(t,x) \mu_N(dx)} \notag \\
&= \frac{1}{N}\operatorname{Ent}_{\mu}(f) - \frac{1}{N}\int{f \log(G) \mu(dx)} \notag \\
&= \frac{1}{N}\int{\operatorname{Ent}_{\mu(dx|y)}(f)\bar{\mu}(dy)} + \frac{1}{N}\operatorname{Ent}_{\bar{\mu}}(\bar{f}) - \frac{1}{N}\int{f \log(G) \mu(dx)} \notag \\
&= \frac{1}{N}\int{\operatorname{Ent}_{\mu(dx|y)}(f)\bar{\mu}(dy)} + \frac{1}{N}\operatorname{Ent}_{\bar{G}\bar{\mu}}(\frac{\bar{f}}{\bar{G}}).
\end{align}
where the last equality is obtained because $G(x)$ only depends on the macroscopic state $Px$. Therefore, to reduce the problem to the study of the macroscopic entropy, we just have to produce an appropriate bound on 

$\frac{1}{N}\int{Ent_{\mu(dx|y)}(f)\bar{\mu}(dy)}.$ Since $\mu(dx|y)$ satisfies the condition LSI($\rho$) by assumption (ii), we have
\begin{align} \frac{1}{N}\int_Y{\operatorname{Ent}_{\mu(dx|y)}}&(f)\bar{\mu}(dy) \notag \\
& \leq \frac{1}{N \rho} \int_X{\frac{|(id_X - P^tNP)\nabla f(x)|^2}{2f(x)}\mu(dx)} \notag \\
& \leq \frac{\gamma}{2NM^2\rho}\int{\frac{\langle \nabla f(x), A \nabla f(x) \rangle}{f(x)}\mu(dx)},
\end{align}
where the last inequality is due to hypothesis (vi). By integrating by parts (\ref{8}), as was done in [GOVW, Proposition 24], we deduce that 
$$\int{f(T,x)\log f(T,x)\mu(dx)} + \int_0^T{\left(\int{\frac{\nabla f \cdot A\nabla f}{f}(t,x)\mu(dx)}\right)dt}$$
\begin{equation}
 = \int{f(0,x)\log f(0,x)\mu(dx)}.
\end{equation}
Since, by assumption (vii), $\int{\Phi(f(0,x))\mu(dx)} \leq C_1N$ and the (mathematical) entropy is non-negative, this tells us that
\begin{equation} \label{19}
\int_0^T{\frac{1}{N}\int{\operatorname{Ent}_{\mu(dx|y)}(f(t,\cdot))\bar{\mu}(dy)}dt} \leq \frac{\gamma C_1}{2M^2\rho},
\end{equation}
which concludes this first step of the proof.

\vspace{0.2cm}

\textbf{Step 2 :} We shall now study the macroscopic relative entropy. This is where we shall use he HWI inequality (Theorem \ref{500}), applied at the macroscopic scale with reference measure $\bar{\mu}$. Indeed, since $\bar{G}\bar{\mu}$ is log-concave, we have for all $T > 0$
\begin{align} \label{17}
\int_0^T{\frac{1}{N}\operatorname{Ent}_{\bar{G}(t,\cdot)\bar{\mu}}}&\left(\frac{\bar{f}(t,\cdot)}{\bar{G}(t,\cdot)}\right)dt \leq \int_0^T{\frac{1}{N}W_2(\bar{f}(t,\cdot)\bar{\mu},\bar{G}(t,\cdot)\bar{\mu}) \hspace{1mm} \sqrt{I_{\bar{G}(t,\cdot)\bar{\mu}}(\bar{f}(t,\cdot)\bar{\mu})}dt} \notag \\
&\leq \sqrt{\int_0^T{W_2(\bar{f}(t,\cdot)\bar{\mu},\bar{G}(t,\cdot)\bar{\mu})^2dt}} \hspace{1mm} \sqrt{\frac{1}{N^2}\int_0^T{I_{\bar{G}(t,\cdot)\bar{\mu}}(\bar{f}(t,\cdot)\bar{\mu})dt}}
\end{align}
We already know that both $\bar{f}\bar{\mu}$ and $\bar{G}\bar{\mu}$ are asymptotically close to $\delta_{\eta}$. By the triangle inequality for the Wasserstein distance, 
\begin{equation} \label{13}
W_2(\bar{f}(t,\cdot)\bar{\mu},\bar{G}(t,\cdot)\bar{\mu})^2 \leq 2 \hspace{1mm} W_2(\bar{f}(t,\cdot)\bar{\mu},\delta_{\eta(t)})^2 +2 \hspace{1mm}W_2(\delta_{\eta(t)},\bar{G}(t,\cdot)\bar{\mu})^2
\end{equation}
Theorem \textbf{\ref{10}} states that
$$\int_0^T{W_2(\bar{f}(t,\cdot)\bar{\mu},\delta_{\eta(t)})^2dt} = \int_0^T{\int{|y - \eta(t)|_Y^2\bar{f}(t,y)\bar{\mu}(dy)}dt}$$
\begin{equation} \label{12}
\leq \frac{2}{\lambda}\Xi(T,M,N)
\end{equation}
where $\Xi$ was defined  in Theorem \textbf{\ref{10}}. Moreover, part (ii) of Proposition \textbf{\ref{11}} tells us that
\begin{equation} \label{16}
\int_0^T{\int{|y-\eta(t)|_Y^2\bar{G}(t,y)\bar{\mu}(dy)}dt} \leq T\frac{M}{\lambda N}
\end{equation}
so we have a bound on the time-integral of the Wasserstein distance that, under suitable assumptions, will go to $0$.

We must now produce a bound on the macroscopic Fisher information. We have
\begin{align} \label{99}
\frac{1}{N^2}I_{\bar{G}\bar{\mu}}\left(\frac{\bar{f}}{\bar{G}}\right) &= \frac{1}{N^2}\int{\frac{|\nabla (\bar{f}/\bar{G})|^2}{\bar{f}/\bar{G}}\bar{G}d\bar{\mu}} \notag \\
&= \frac{1}{N^2}\int{\frac{|(\nabla \bar{f})/\bar{G} - \bar{f}\nabla \bar{G}/\bar{G}^2|^2}{\bar{f}}\bar{G}^2d\bar{\mu}} \notag \\
&\leq \frac{2}{N^2}\int{\frac{|\nabla \bar{f}|^2}{\bar{f}}d\bar{\mu}} + \frac{2}{N^2}\int{\frac{|\nabla \bar{G}|^2}{\bar{G}^2}\bar{f}d\bar{\mu}} \notag \\
&= \frac{2}{N^2}\int{\frac{|\nabla \bar{f}|^2}{\bar{f}}d\bar{\mu}} + 2|\nabla \bar{H}(\eta)|^2
\end{align}
Now, since we have a lower bound $\tau$ on the spectral values of $A$, $1/\tau$ is an upper bound on the spectral values of $A^{-1}$. Since for any $y \in Y$
\begin{align}
\langle \bar{A}^{-1} y, y \rangle_Y &= \langle PA^{-1}NP^t y, y \rangle_Y \notag \\
&= \frac{1}{N} \langle A^{-1}NP^t y, NP^t y\rangle_X \notag \\
&\leq \frac{1}{N\tau} \langle NP^t y, NP^t y\rangle_X \notag \\
&= \frac{1}{\tau}|y|_Y^2, \notag
\end{align}
so $1/\tau$ is an upper bound on the spectral values of $\bar{A}^{-1}$, and thus $\tau$ is also a lower bound on the spectral values of $\bar{A}$. Therefore
\begin{align} \label{14}
\int_0^T{|\nabla \bar{H}(\eta(t))|^2dt} &\leq \frac{1}{\tau}\int_0^T{\langle \bar{A} \nabla \bar{H}(\eta(t)), \nabla \bar{H}(\eta(t)) \rangle dt} \notag \\
&= -\frac{1}{\tau}\int_0^T{\langle \frac{d\eta}{dt}(t), \nabla \bar{H}(\eta(t)) \rangle dt} \notag \\
&= \frac{1}{\tau}(\bar{H}(\eta(0)) - \bar{H}(\eta(T))) \notag \\
&\leq \frac{C_2 + \beta}{\tau}.
\end{align}

To obtain a bound on $\int_0^T{\int{\frac{|\nabla \bar{f}|^2}{\bar{f}}d\bar{\mu}}dt}$, we shall use the following proposition, that was proved in [GOVW, Proposition 20].

\begin{prop}
Assume that $\kappa$ as given by (\ref{5}) is finite and that for all $y \in Y$, $\mu(dx|y)$ satisfies LSI($\rho$). Then, for any positive, $\textsl{C}^1$ function on $X$ one has, for any $y \in Y$ and $s \in (0,1)$
\begin{align} \frac{1}{N}\frac{|\nabla_Y \bar{f}(y)|_Y^2}{\bar{f}(y)} &\leq \frac{1}{1 - s}\left(\frac{\kappa^2}{\rho^2}\right)\int{\frac{|(id_X - PNP^t)\nabla f(x)|^2}{f(x)}\mu(dx|y)} \notag \\
&+ \frac{1}{s}\int{\frac{|PNP^t\nabla f(x)|^2}{f(x)}\mu(dx|y)}
\end{align}
\end{prop}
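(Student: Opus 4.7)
The plan is to differentiate $\bar f$ along a lift of $Y$-directions into $X$, then split the resulting expression into a macroscopic piece and a fluctuation (covariance) piece which can be bounded by the microscopic Fisher information via LSI and the mixed-Hessian bound $\kappa$.

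First I would compute the directional derivative $\partial_v\bar f(y)$ for a unit $v\in Y$. Since $PNP^t=\operatorname{id}_Y$, the lift $\tilde v := NP^tv$ satisfies $P\tilde v = v$, so the translation $x\mapsto x+\epsilon\tilde v$ carries the fiber $\{Px=y\}$ onto $\{Px=y+\epsilon v\}$. Writing $\bar f(y)\bar\mu(y) = \int_{Px=y}f(x)e^{-H(x)}\,d\mathcal{H}(x)$, differentiating at $\epsilon=0$, and using $\bar\mu(dy)=e^{-N\bar H(y)}dy$ together with the consistency identity obtained from $f\equiv 1$ (namely $\int\tilde v\cdot\nabla H\,\mu(dx|y)=N\partial_v\bar H(y)$), one obtains
\[
\partial_v\bar f(y) \;=\; \int \tilde v\cdot\nabla f\,\mu(dx|y) \;-\; \cov_{\mu(dx|y)}\!\bigl(f,\,\tilde v\cdot\nabla H\bigr).
\]
The operator $\pi := NP^tP$ is symmetric and idempotent, hence an orthogonal projection onto $\operatorname{Ran}(P^t)$. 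Since $\tilde v \in \operatorname{Ran}(\pi)$, one has $\tilde v\cdot\nabla f = \tilde v\cdot\pi\nabla f$, and $PNP^t=\operatorname{id}_Y$ forces $PP^t = \operatorname{id}_Y/N$, so $|\tilde v|_X^2 = N|v|_Y^2 = N$.

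Next I would bound the two pieces. For the macroscopic part, Cauchy--Schwarz gives
\[
\Bigl(\int\tilde v\cdot\pi\nabla f\,\mu(dx|y)\Bigr)^{\!2} \;\le\; |\tilde v|^2\,\bar f(y)\int\frac{|\pi\nabla f|^2}{f}\,\mu(dx|y).
\]
For the covariance, subtract a constant (without changing it) to replace $\tilde v\cdot\nabla H$ by $\phi(x):=\tilde v\cdot\bigl(\nabla H(x)-\nabla H(x_0)\bigr)$, for any fixed $x_0$ on the fiber. The fiber's tangent space is $\operatorname{Ran}(\operatorname{id}_X-\pi)$, so the tangential gradient of $\phi$ equals $(\operatorname{id}_X-\pi)\Hess H(x)\cdot\tilde v$, and the definition \eqref{5} of $\kappa$ yields the pointwise bound $|(\operatorname{id}_X-\pi)\nabla\phi|\le\kappa|\tilde v|$; i.e.\ $\phi$ is $(\kappa|\tilde v|)$-Lipschitz along the fiber.

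The crux, and the step I expect to be the main obstacle, is to convert this tangential Lipschitz bound into the Fisher-information estimate
\[
\cov_{\mu(dx|y)}(f,\phi)^2 \;\le\; \frac{\kappa^2|\tilde v|^2}{\rho^2}\,\bar f(y)\int\frac{|(\operatorname{id}_X-\pi)\nabla f|^2}{f}\,\mu(dx|y).
\]
I would deduce it from LSI$(\rho)$ by a Herbst/Donsker--Varadhan argument: LSI applied to $\sigma\phi$ gives the sub-Gaussian concentration $\int e^{\sigma(\phi-\bar\phi)}\mu(dx|y)\le\exp\!\bigl(\sigma^2\kappa^2|\tilde v|^2/(2\rho)\bigr)$, and plugging this into the entropic duality inequality $\sigma\int f(\phi-\bar\phi)\,\mu(dx|y)\le\bar f\log\!\int e^{\sigma(\phi-\bar\phi)}\mu(dx|y) + \Ent_{\mu(dx|y)}(f)$, then using $\Ent_{\mu(dx|y)}(f)\le(2\rho)^{-1}\int|(\operatorname{id}_X-\pi)\nabla f|^2/f\,\mu(dx|y)$ and optimizing in $\sigma>0$, delivers the displayed bound. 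Finally, I apply $(a+b)^2\le a^2/s + b^2/(1-s)$ to $\partial_v\bar f$, divide by $N\bar f(y)$, substitute $|\tilde v|^2/N=1$, and take the supremum over unit $v\in Y$ to recover the stated inequality.
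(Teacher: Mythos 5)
Your proof is correct and follows essentially the same route as the one the paper refers to (this proposition is quoted from [GOVW], Proposition 20, whose proof is exactly this decomposition of $\nabla_Y\bar f$ into the projected gradient term plus a covariance term, with the covariance bounded via the LSI-based estimate $|\cov_{\mu(dx|y)}(f,\phi)|\le\rho^{-1}\sup|\nabla\phi|\,\bar f(y)^{1/2}I^{1/2}$ that you derive by the Herbst/duality argument). The step you flag as the main obstacle is handled correctly, and you also rightly read the $PNP^t$ appearing in the statement as the projection $NP^tP$ on $X$.
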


Applying this bound to the density $f$ with $s = \frac{\rho^2}{\kappa^2 + \rho^2}$ gives us the bound
\begin{align} \label{15} \frac{1}{N^2}&\int_0^T{\int{\frac{|\nabla \bar{f}|^2}{\bar{f}}d\bar{\mu}}dt} \leq \frac{\kappa^2 + \rho^2}{\rho^2}\frac{1}{N}\int_0^T{\int{\frac{|\nabla f(x)|^2}{f(x)}\mu(dx)}dt} \notag \\
&\leq \frac{\kappa^2 + \rho^2}{\rho^2}\frac{1}{N}\frac{1}{\tau}\int_0^T{\int{\frac{\nabla f \cdot A\nabla f}{f}d\mu}dt} \notag \\
&\leq \frac{C_1(\kappa^2 + \rho^2)}{\tau \rho^2}
\end{align}
where the final inequality was already proved in step 1.

\noindent Combining (\ref{17}), (\ref{13}), (\ref{12}), (\ref{16}), (\ref{99}), (\ref{14}) and (\ref{15}) gives us the bound
\begin{align} \label{18}
&\int_0^T{\frac{1}{N}Ent_{\bar{G}(t,\cdot)\bar{\mu}}}(\frac{\bar{f}(t,\cdot)}{\bar{G}(t,\cdot)})dt \notag \\
&\leq \sqrt{\frac{2TM}{N} + \frac{4}{\lambda}\Xi(T,M,N)} \notag \\
& \hspace{2cm} \times \sqrt{2\frac{C_2 + \beta}{N^2\tau} + 2\frac{C_1(\kappa^2 + \rho^2)}{\tau \rho^2}}.
\end{align}
This concludes Step 2.

\vspace{0.2cm}

\noindent \textbf{Step 3 :} Recombining (\ref{19}) and (\ref{18}) gives us the full bound
\begin{align}&\int_0^T{\frac{1}{N}\int{\Phi\left(\frac{f(t,x)}{G(t,x)}\right)G(t,x) \mu_N(dx)}dt} \notag \\
& \leq \sqrt{\frac{2TM}{N} + \frac{4}{\lambda}\Xi(T,M,N)}\times \sqrt{2\frac{C_2 + \beta}{N^2\tau} + 2\frac{C_1(\kappa^2 + \rho^2)}{\tau \rho^2}} + \frac{\gamma C_1}{2M^2\rho}.
\end{align}
To prove (b), we have
\begin{align} \frac{1}{N}&\int{\Phi\left(\frac{f(t,x)}{G(t,x)}\right)G(t,x) \mu_N(dx)} \notag \\
&= \left|\frac{1}{N}\int{f(t,x)\log f(t,x) \mu(dx)} - \frac{1}{N}\int{f(t,x) \log G(t,x) \mu(dx)}\right| \notag \\
&= \left|\frac{1}{N}\int{f(t,x)\log f(t,x) \mu(dx)} - \frac{1}{N}\int{\bar{f}(t,y) \log \bar{G}(t,y) \bar{\mu}(dy)}\right| \notag  \end{align}
and thus
\begin{align} \label{20}
&\int_0^T{\left|\frac{1}{N}\int{f(t,x)\log f(t,x) \mu(dx)} - \bar{H}(\eta(t))\right|dt} \notag \\
&\leq \int_0^T{\left|\frac{1}{N}\int{f(t,x)\log f(t,x) \mu(dx)} - \frac{1}{N}\int{f(t,x) \log G(t,x) \mu(dx)}\right|dt} \notag \\
& \hspace{1cm} +\int_0^T{ \left|\frac{1}{N}\int{f(t,x) \log G(t,x) \mu(dx)} - \bar{H}(\eta(t))\right|dt} \notag \\
&\leq \int_0^T{\frac{1}{N}\int{\Phi\left(\frac{f(t,x)}{G(t,x)}\right)G(t,x) \mu_N(dx)}dt} \notag \\
&\hspace{0.5cm}+ \int_0^T{\left|\frac{1}{N}\int{\bar{f}(t,y) \log \bar{G}(t,y) \bar{\mu}(dy)} - \bar{H}(\eta(t))\right|dt} \notag \\
&= \int_0^T{\frac{1}{N}\int{\Phi\left(\frac{f(t,x)}{G(t,x)}\right)G(t,x) \mu_N(dx)}dt} \notag \\
& \hspace{0.5cm} + \int_0^T{\left|\frac{1}{N}\int{(\bar{f}(t,y) -\bar{G}(t,y))\log \bar{G}(t,y)\bar{\mu}(dy)}\right| dt} \notag \\
& \hspace{1cm} + \int_0^T{\left|\frac{1}{N}\int{\Phi(G(t,\cdot))d\mu_N} - \bar{H}(\eta(t))\right|dt}. 
\end{align}

\noindent This leaves us with three quantities to bound. The first one is exactly the quantity that is bounded by (a). The third quantity can be bounded using part (iii) of Proposition \textbf{\ref{11}} :

\begin{align}
\int_0^T&{\left|\frac{1}{N}\int{\Phi(G^{\eta})d\mu_N} - \bar{H}(\eta)\right|dt} \notag \\
&\leq \frac{T(M-1)}{2N}\max\left(\left|\log\left(\frac{\Gamma(Y,|\cdot|_Y)^{2/(M-1)}}{\Lambda N}\right)\right|\left|\log\left(\frac{\Gamma(Y,|\cdot|_Y)^{2/(M-1)}}{\lambda N}\right)\right|\right) \notag \\
& \hspace{1cm} +\int_0^T{\sqrt{\frac{M}{\lambda N}} |\nabla \bar{H}(\eta)|dt} \notag \\
&\leq \frac{T(M-1)}{2N}\max\left(\left|\log\left(\frac{\Gamma(Y,|\cdot|_Y)^{2/(M-1)}}{\Lambda N}\right)\right|\left|\log\left(\frac{\Gamma(Y,|\cdot|_Y)^{2/(M-1)}}{\lambda N}\right)\right|\right) \notag \\
& \hspace{1cm} +\sqrt{\frac{TM}{\lambda N}\frac{C_2 + \beta}{\tau}}
\end{align}
where the final inequality is due to (\ref{14}). To conclude, we just have to bound the second quantity in the right-hand side of (\ref{20}). This will be possible because, as $\log \bar{G}$ is slowly varying (it is an affine function), since $\bar{f}\mu$ and $\bar{G}\mu$ are close for the second Wasserstein distance, when integrating against $\log \bar{G}$ they act the same way.

\begin{align}
&\int_0^T{\left|\frac{1}{N}\int{(\bar{f}(t,y) -\bar{G}(t,y))\log \bar{G}(t,y)\bar{\mu}(dy)}\right| dt} \notag \\
&\hspace{1cm} = \int_0^T{\left|\frac{1}{N}\int{(\bar{f}(t,y) -\bar{G}(t,y))(N\nabla \bar{H}(\eta(t)) \cdot y) \bar{\mu}(dy)}\right| dt} \notag \\
&\hspace{1cm} \leq \int_0^T{\left|\int{\langle(y - \eta(t)), \nabla \bar{H}(\eta(t)) \rangle \bar{f}(t,y)\bar{\mu}(dy)}\right| dt} \notag \\
&\hspace{3cm}+ \int_0^T{\left|\int{\langle(y - \eta(t)), \nabla \bar{H}(\eta(t)) \rangle \bar{G}(t,y)\bar{\mu}(dy)}\right| dt} \notag \\
&\hspace{1cm} \leq \sqrt{\int_0^T{\int{|y - \eta(t)|_Y^2\bar{f}(t,y)\bar{\mu}(dy)}dt}} \sqrt{\int_0^T{|\nabla \bar{H}(\eta(t))|_Y^2dt}} \notag \\
&\hspace{3cm}+ \sqrt{\int_0^T{\int{|y - \eta(t)|_Y^2\bar{G}(t,y)\bar{\mu}(dy)}dt}} \sqrt{\int_0^T{|\nabla \bar{H}(\eta(t))|_Y^2dt}} \notag \\
&\hspace{1cm} \leq \sqrt{\frac{2(C_2 + \beta) \hspace{0.1cm} \Xi(T,M,N)}{\lambda \tau}} + \sqrt{\frac{(C_2 + \beta)M}{N\lambda \tau}}
\end{align}
which was the last element needed to get the full bound (b).

\end{proof}

\section{Application to Kawasaki Dynamics}

\subsection{Proof of Theorem \ref{30}}

\noindent We shall now prove Theorem \textbf{\ref{30}} as a consequence of Corollary \textbf{\ref{31}}. We consider, in the same way as in [GOVW], a sequence of step functions $\bar{\eta}_{0,\ell} \in \bar{Y}_{\ell}$ such that 
$$||\bar{\eta}_{0,\ell} - \zeta_0||_{\textsl{L}^2} \longrightarrow 0,$$
canonically associate to each of them a vector $\eta_{0,l} \in Y_{\ell}$ and consider the solution $\eta_{\ell}$ of 
$$\frac{d\eta_{\ell}}{dt} = -\bar{A}\nabla \bar{H}(\eta_{\ell}), \hspace{1cm} \eta_{\ell}(0) = \eta_{0,\ell}.$$ We also assume (\ref{601}), that is 
\begin{equation}  N_{\ell} \rightarrow \infty; \hspace{1cm} M_{\ell} \rightarrow \infty; \hspace{1cm} \frac{M_{\ell}}{N_{\ell}} \rightarrow 0;
\end{equation}
which, in this setting, will imply (\ref{602}).

The following proposition was proven in [GOVW]: 

\begin{prop} \label{32}
With the above notations, the step functions $\bar{\eta}_{\ell}$ converge strongly in $\textsl{L}^{\infty}(\textsl{H}^{-1})$ to the unique weak solution of 
$$\frac{\partial \zeta}{\partial t} = \frac{\partial^2}{\partial \theta^2}\varphi'(\zeta), \hspace{1cm} \zeta(0,\cdot) = \zeta_0.$$
\end{prop}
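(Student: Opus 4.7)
Since the proposition is cited as already proved in [GOVW], my plan is to reconstruct the standard PDE-style argument that underlies it: extract uniform bounds from the gradient-flow structure, pass to a limit by compactness, identify the limit as a weak solution, and conclude by uniqueness.

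First, I would exploit the fact that the ODE $\frac{d\eta_\ell}{dt} = -\bar{A}\nabla\bar{H}(\eta_\ell)$ is a gradient flow for $\bar{H}$ with respect to the metric induced by $\bar{A}^{-1}$. In the Kawasaki setting, $\bar{A}$ is (up to factors) a discrete Laplacian on the macroscopic lattice, so $\bar{A}^{-1}$ corresponds to a discretized $H^{-1}$ norm. Multiplying the ODE by $\nabla \bar{H}(\eta_\ell)$ and integrating in time yields the standard energy identity
\begin{equation*}
\bar{H}(\eta_\ell(T)) + \int_0^T \langle \bar{A}\nabla\bar{H}(\eta_\ell), \nabla\bar{H}(\eta_\ell) \rangle \, dt = \bar{H}(\eta_\ell(0)).
\end{equation*}
Assumption $C_2$ on $\bar{H}(\eta_{0,\ell})$, the lower bound $-\beta$ on $\bar{H}$, and the spectral lower bound on $\bar{A}$ then give uniform bounds on $\nabla \bar{H}(\eta_\ell) = \psi_K'(\eta_\ell)$ in $L^2([0,T];L^2(\mathbb{T}))$, and hence (via $\psi_K \to \varphi$ in $C^2$ with $\varphi'' \geq \lambda$) on $\bar{\eta}_\ell$ itself in $L^2([0,T];L^2(\mathbb{T}))$. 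A further application of the same identity combined with the definition of $\bar{A}$ yields a bound on $\partial_t \bar{\eta}_\ell$ in $L^2([0,T]; H^{-1}(\mathbb{T}))$. The strict convexity of $\bar{H}$ also gives an $L^\infty_t(L^2_\theta)$ bound by the usual argument: the $L^2$ norm is controlled by $\bar{H}$ up to a constant, and $\bar{H}$ is decreasing along the flow.

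Next, I would invoke an Aubin--Lions type compactness lemma: the embedding $L^2(\mathbb{T}) \hookrightarrow H^{-1}(\mathbb{T})$ being compact, together with the $L^\infty_t(L^2_\theta)$ bound and the $L^2_t(H^{-1}_\theta)$ bound on $\partial_t$, yields relative compactness of $\{\bar{\eta}_\ell\}$ in $C([0,T]; H^{-1}(\mathbb{T}))$. Extract a subsequence converging to some $\zeta \in L^\infty_t(L^2_\theta)$ strongly in $C_t(H^{-1}_\theta)$, and by the bound on $\psi_K'(\bar{\eta}_\ell)$ also weakly in $L^2_t(L^2_\theta)$ up to passing to a further subsequence.

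Then I would pass to the limit in the weak formulation. For any $\xi \in L^2(\mathbb{T})$, the discrete equation reads
\begin{equation*}
\left\langle \xi, \partial_t \bar{\eta}_\ell \right\rangle_{H^{-1}} = -\int_{\mathbb{T}} \xi_\ell \, \psi_K'(\bar{\eta}_\ell)\, d\theta
\end{equation*}
(where $\xi_\ell$ is a suitable step-function projection of $\xi$), modulo corrections arising from the discrete nature of $\bar{A}$ and of the Riemannian structure on $Y$. The main obstacle is controlling the nonlinear term: one needs strong convergence of $\bar{\eta}_\ell$ somewhere along which $\psi_K'$ is continuous. This is delivered by the Aubin--Lions compactness combined with the uniform $C^2$ convergence $\psi_K \to \varphi$: since $\bar{\eta}_\ell \to \zeta$ in $C_t(H^{-1}_\theta)$ and $\psi_K'$ is uniformly Lipschitz, one can extract a subsequence converging a.e., and then use the $L^2$ bound to upgrade $\psi_K'(\bar{\eta}_\ell) \rightharpoonup \varphi'(\zeta)$ weakly in $L^2_t(L^2_\theta)$. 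The discretization errors on the $\bar{A}$ side vanish by standard consistency of finite differences.

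Finally, identifying the limit $\zeta$ as a weak solution of $\partial_t \zeta = \partial_\theta^2 \varphi'(\zeta)$ with initial datum $\zeta_0$ (the initial condition passes to the limit by the $L^2$ convergence $\bar{\eta}_{0,\ell} \to \zeta_0$), and invoking the uniqueness of weak solutions of this nonlinear parabolic equation (which follows from the strict convexity $\varphi'' \geq \lambda > 0$ via a standard monotone-operator / Gronwall argument in $H^{-1}$), I conclude that the entire sequence converges, not just a subsequence. The expected main difficulty is precisely the simultaneous handling of the discretization and the nonlinearity in the limit passage; the uniform $C^2$ convergence $\psi_K \to \varphi$ from Proposition \ref{203} is what makes this tractable.
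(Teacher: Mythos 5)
The paper does not actually prove this proposition; it is quoted verbatim from [GOVW], so the comparison here is with the argument given there. Your overall architecture — energy/dissipation estimates from the gradient-flow structure, Aubin--Lions compactness in $C_t(H^{-1}_\theta)$, passage to the limit in the weak formulation, and uniqueness to upgrade subsequential to full convergence — is exactly the route taken in [GOVW], and the a priori bounds you list ($L^\infty_t(L^2_\theta)$ on $\bar\eta_\ell$ via the decay of $\bar H$ and its quadratic lower bound, $L^2_t(H^{-1}_\theta)$ on $\partial_t\bar\eta_\ell$ via the dissipation identity in the $\bar A^{-1}$-metric) are the right ones.

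There is, however, one genuine gap, and it sits at the step you yourself flag as the main difficulty. You claim that since $\bar\eta_\ell\to\zeta$ in $C_t(H^{-1}_\theta)$ and $\psi_K'$ is uniformly Lipschitz, ``one can extract a subsequence converging a.e.'' Convergence in $H^{-1}$ together with a mere $L^\infty_t(L^2_\theta)$ bound yields only \emph{weak} $L^2$ convergence of $\bar\eta_\ell$, not strong or pointwise convergence, and weak convergence does not commute with the nonlinearity $\psi_K'$; as written, the identification $\psi_K'(\bar\eta_\ell)\rightharpoonup\varphi'(\zeta)$ does not follow. Moreover, the only bound you extract on $\psi_K'(\bar\eta_\ell)$ is $L^2_t(L^2_\theta)$, obtained from the crude spectral lower bound $\bar A\geq\tau$, and this is too weak to close the argument. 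The repair uses the dissipation identity at full strength: in the Kawasaki setting $\langle\bar A v,v\rangle_Y$ is comparable to the squared \emph{discrete $H^1$ norm} of $v$, so $\int_0^T\langle\bar A\nabla\bar H(\eta_\ell),\nabla\bar H(\eta_\ell)\rangle\,dt\leq C$ controls $\partial_\theta\psi_K'(\bar\eta_\ell)$ in $L^2_{t,\theta}$ (in the discrete sense). One can then either (a) use $\psi_K''\geq\lambda$ to transfer this to a discrete $H^1$ bound on $\bar\eta_\ell$ itself, so that Aubin--Lions gives strong $L^2_{t,\theta}$ compactness and hence a.e. convergence along a subsequence, or (b) keep only the weak $L^2_t(H^1_\theta)$ convergence of $\psi_K'(\bar\eta_\ell)$ to some limit $w$, pair it against the strongly $H^{-1}$-convergent $\bar\eta_\ell$ to pass to the limit in $\iint\psi_K'(\bar\eta_\ell)\,\bar\eta_\ell$, and identify $w=\varphi'(\zeta)$ by the Minty monotonicity trick using the convexity of $\psi_K$ and $\varphi$ — which is essentially what [GOVW] does. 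Either way, the $H^1$-type information hidden in the $\bar A$-metric (or the monotonicity of $\varphi'$) is the missing ingredient; without it the limit passage in the nonlinear term fails.
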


We first have to check that the assumptions of Theorem \textbf{\ref{4}} hold with uniform constants. It has already been checked in [GOVW] that this is the case for assumptions $(i)$ to $(vii)$, so we just have to check assumptions $(viii)$ and $(ix)$.

$\bullet$ It is easy to compute the spectral values of $A$, since it is a circulant matrix. We have 
$$\text{Sp(A)} = \left\{2N^2\left(1 - \cos(\frac{2k\pi}{N})\right), \hspace{0.5cm} k = 0,..,N-1\right\}$$
as an operator on $\mathbb{R}^N$, and the spectral value $0$ corresponds to the action of $A$ on $\mathbb{R}(1,..,1)$, which we don't take into account, since we only consider the action of $A$ on the hyperplane of mean $0$. The lowest spectral value of $A$ is then 
$$\inf \text{Sp(A)} = 2N^2\left(1 - \cos(\frac{2\pi}{N})\right) \underset{N \uparrow \infty}{\longrightarrow} 4\pi^2 > 0.$$
Since the sequence of lowest spectral values converges to a strictly positive limit, we have a strictly positive lower bound on the whole sequence, which proves assumption $(viii)$ with a uniform constant $\tau$.

$\bullet$ Since, by Proposition \textbf{\ref{24}}, $\psi_K''$ converges uniformly to $\varphi''$ and $(\text{Hess} \bar{H}(y))_{ij} = \psi_K''(y_i)\delta_{ij}$, to prove assumption $(ix)$ with a uniform constant, we just have to prove that $\varphi''$  is bounded above. This was actually already proved in ([GOVW], Lemma 41), where is proved both a lower and an upper bound on the second derivative of $\varphi^*(\sigma) = \log \int{\exp(\sigma x - \psi(x))dx}$. It is a property of the Legendre transform that, if $f$ is a strictly convex function, its Legendre transform $f^*$ satisfies $(f^*)^* = f$ and $(f^*)' = (f')^{-1}$, so the strictly positive upper and lower bounds on the second derivative of $\varphi^*$ translate into strictly positive upper and lower bounds on $\varphi''$.

We will also check that, in this case, (\ref{601}) implies (\ref{602}). This is easy to check: since we have $|y|_Y^2 = \frac{1}{M}\sum y_i^2$, we can explicitly compute 
\begin{equation} \label{666}
\Gamma(Y, |\cdot |_Y) = (\sqrt{2\pi M})^{M-1},
\end{equation}
and (\ref{602}) follows.

We can therefore apply Corollary \textbf{\ref{31}}. (\ref{ent1}) is then a direct application of (a'), so we will concentrate on the proof of (\ref{ent2}). Part (b') of Corollary \textbf{\ref{31}} states that
$$\int_0^T{\left|\frac{1}{N}\int{\Phi(f_{\ell}(t,x))\mu_{\ell}(dx)} - \bar{H_{\ell}}(\eta_{\ell}(t))\right|dt} \longrightarrow 0,$$
so we now just have to prove that
\begin{equation} \label{201}
\int_0^T{\left|\bar{H_{\ell}}(\eta_{\ell}(t)) - \int_{\mathbb{T}}{\varphi(\zeta(\theta,t))d\theta} + \varphi\left(\int_{\mathbb{T}}{\zeta(t,\theta) d\theta}\right)\right|dt} \underset{\ell \uparrow \infty}{\longrightarrow} 0.
\end{equation}
We have the expression 
\begin{align}
\bar{H}(y) &= \frac{1}{M}\sum \psi_K(y_i) +\frac{1}{N}\log \bar{Z} \notag \\
&= \int_{\mathbb{T}}{\psi_K(\bar{y})d\theta} +\frac{1}{N}\log \bar{Z}
\end{align}
As a consequence of Proposition \textbf{\ref{32}}, we shall prove that $\int_{\mathbb{T}}{\psi_K(\bar{\eta})d\theta}$ converges to $\int_{\mathbb{T}}{\varphi(\zeta(\theta,t))d\theta}$ in a time-integrated sense, and then we shall prove that $\frac{1}{N}\log \bar{Z}$ converges to $-\varphi(\int_{\mathbb{T}}{\zeta(t,\theta)d\theta})$, which will yield (\ref{201}). By the triangle inequality
\begin{align} &\left|\int_{\mathbb{T}}{\psi_K(\bar{\eta}(\theta,t))d\theta } - \int_{\mathbb{T}}{\varphi(\zeta(\theta,t))d\theta}\right| \notag \\
&\hspace{0.5cm} \leq \int_{\mathbb{T}}{|\psi_K(\bar{\eta_{\ell}}(t,\theta)) - \varphi(\bar{\eta_{\ell}}(t,\theta))|d\theta} + \int_{\mathbb{T}}{|\varphi(\bar{\eta_{\ell}}(t,\theta)) - \varphi(\zeta(t, \theta))|d\theta}.
\end{align}
But 
$$\int_{\mathbb{T}}{|\psi_K(\bar{\eta_{\ell}}(t,\theta)) - \varphi(\bar{\eta_{\ell}}(t,\theta))|d\theta} \leq ||\psi_K - \varphi||_{\infty} \underset{K \rightarrow \infty}{\longrightarrow} 0$$
and by convexity, and since $\varphi'' \leq \Lambda$,
\begin{align} \varphi'(\zeta(t, \theta))\left(\bar{\eta_{\ell}}(t,\theta) - \zeta(t, \theta)\right) &\leq \varphi(\bar{\eta_{\ell}}(t,\theta)) - \varphi(\zeta(t, \theta)) \notag \\
&\leq \varphi'(\zeta(t, \theta))(\bar{\eta_{\ell}}(t,\theta) - \zeta(t, \theta)) + \frac{\Lambda}{2}\left|\bar{\eta}_{\ell}(t, \theta) - \zeta(t, \theta)\right|^2
\end{align}
We know that $\bar{\eta}$ converges to $\zeta$ in $\textsl{L}^{\infty}(H^{-1})$. Since
$$\int_0^T{\int_{\mathbb{T}}{\left|\varphi'(\zeta(t, \theta))(\bar{\eta_{\ell}}(t,\theta) - \zeta(t, \theta))\right|d\theta}dt} \leq \sqrt{\int_0^T{||\varphi'(\rho(t))||_{H^1}^2dt}}\sqrt{\int_0^T{||\bar{\eta_{\ell}}(t) - \zeta(t)||_{H^{-1}}^2dt}}$$
and $\varphi'(\zeta) \in \textsl{L}^2(H^1)$
we deduce that $\int_{\mathbb{T}}{|\varphi(\bar{\eta_{\ell}}(t,\theta)) - \varphi(\zeta(t, \theta))|d\theta}$ converges to 0 in a time-integrated sense, and thus
$$\int_0^T{\left|\int_{\mathbb{T}}{\psi_K(\bar{\eta_{\ell}}(\theta,t))d\theta } - \int_{\mathbb{T}}{\varphi(\zeta(\theta,t))d\theta}\right|dt} \longrightarrow 0.$$

Note that, if we have a time-uniform bound on $||\varphi'(\rho(t))||_{H^1}$, this convergence actually holds uniformly in time. In the proof of Proposition \ref{reg_hydro}, we show that such a bound holds on time intervals $[\epsilon, +\infty[$, for any $\epsilon > 0$.

To conclude the proof of (\ref{201}), it is enough to prove that 
\begin{equation} \label{55}
\frac{1}{N}\log \bar{Z} \longrightarrow \varphi\left(\int_{\mathbb{T}}{\zeta(t,\theta)d\theta}\right)
\end{equation}
uniformly in time. 

First of all, recall that at any time $t$ we have $\int_{\mathbb{T}}{\zeta(t,\theta)d\theta} = m$. Also recall that
$$\bar{Z} = \int{\exp\left(-\frac{N}{M}\underset{i=1}{\stackrel{M}{\sum}}\psi_K(y_i)\right)dx}.$$
Since $y \mapsto \frac{1}{M}\underset{i=1}{\stackrel{M}{\sum}}\psi_K(y_i)$ is strictly convex, it has a unique minimum on $Y$, and since the variables are exchangeable this minimum can only be reached for $y_1 = ... = y_M$, and by definition of $Y$ this can only be the case if all the $y_i$ are equal to $m$.
Since $\lambda \leq \psi_K'' \leq \Lambda$ and $||y||_Y = \frac{1}{M}\underset{i=1}{\stackrel{M}{\sum}}|y_i|^2$, by convexity, for all $y \in Y$ we have
$$\psi_K(m) + \frac{\lambda}{2M}||y - m||_2^2 \leq \frac{1}{M}\underset{i=1}{\stackrel{M}{\sum}}\psi_K(y_i) \leq \psi_K(m) + \frac{\Lambda}{2M}||y - m||_2^2$$
where $|| \cdot ||_2$ is the usual Euclidean norm, and we identify the mean $m$ and the vector of $Y$ where all coordinates are equal to $m$. We take the exponential of this inequality multiplied by $-N$ and integrate, which gives us, since for any $y \in Y$, $y - m$ is of mean $0$,
$$-\inf \psi_K + \frac{1}{N}\log \int_{\mathbb{R}^{M-1}}{\exp\left(-\frac{\Lambda N}{2M}||y||_2^2\right)dy} \leq \frac{1}{N} \log \bar{Z}$$
$$\leq -\inf \psi_K + \frac{1}{N}\log \int_{\mathbb{R}^{M-1}}{\exp\left(-\frac{\lambda N}{2M}||y||_2^2\right))dy}.$$
Since 
$$\frac{1}{N}\log \int_{\mathbb{R}^{M-1}}{\exp\left(-\frac{\Lambda N}{2M}||y||_2^2\right)dy} = \frac{M-1}{2N}\log\left(\frac{\Lambda N}{2\pi}\right) \rightarrow 0$$
and the same goes for $\frac{1}{N}\log \int{\exp(-\frac{\lambda N}{2M}||y||_2^2)dy}$, we deduce that $|\frac{1}{N} \log \bar{Z} + \inf \psi_K|$ goes to $0$ uniformly in time. Finally, since $\psi_K$ converges uniformly to $\varphi$, $\psi_K(m)$ converges to $\varphi(m)$, which implies the desired result.

\subsection{Proof of Theorem \ref{77}}

We shall now use the time-integrated convergence of the entropy we just proved to show that this convergence actually holds pointwise. Our proof closely follows an idea of [K]. This method was pointed out to us by the (anonymous) referee. It is also possible to deduce the pointwise convergence from the time integrated convergence by using the relative entropy method devised in [Y], but this yields a much longer proof.

In a first step, we will show pointwise convergence of the entropy, by showing that 
\begin{equation} \label{liminf}
\liminf \hspace{1mm} \frac{1}{N}\operatorname{Ent}_{\mu_N}(f_N(t)) \geq \int{\varphi(\zeta(t,\theta))d\theta} - \varphi\left(\int{\zeta(t,\theta)d\theta} \right)
\end{equation}
and 
\begin{equation} \label{limsup}
\limsup \hspace{1mm} \frac{1}{N}\operatorname{Ent}_{\mu_N}(f_N(t)) \leq \int{\varphi(\zeta(t,\theta))d\theta} - \varphi\left(\int{\zeta(t,\theta)d\theta} \right).
\end{equation}
In a second step, we will show that this pointwise convergence actually holds uniformly in time, as long as we stay away from time $t = 0$.

Let us start with the upper bound. We know that 
$$\frac{d}{dt}\int{f(t,x)\log f(t,x) \mu(dx)} = -\int{\frac{\langle A\nabla f, \nabla f \rangle}{f}d\mu} \leq 0,$$
so that, for any $N$, the entropy $\frac{1}{N}\operatorname{Ent}_{\mu_N}(f_N)$ is decreasing in time. This is just the H-theorem expressed in the context of our model.

Therefore, for any $N$, any $t > 0$ and $\epsilon$ small enough, we have

\begin{equation} \label{referee}
\frac{1}{N}\operatorname{Ent}_{\mu_N}(f_N(t)) \leq \frac{1}{\epsilon}\int_{t - \epsilon}^t{\frac{1}{N}\operatorname{Ent}_{\mu_N}(f_N(s))ds}
\end{equation}

we know from Theorem \ref{30} that $\int_{t - \epsilon}^t{\frac{1}{N}\operatorname{Ent}_{\mu_N}(f_N(s))ds}$ converges to 

$\int_{t - \epsilon}^t{\int_{\T}{\varphi(\zeta(s,\theta))d\theta} - \varphi \left(\int{\zeta(s,\theta)d\theta}\right) ds}$. Therefore, for any $t > 0$ and any $\epsilon$ small enough, we have
$$\limsup \frac{1}{N}\operatorname{Ent}_{\mu_N}(f_N(t)) \leq \frac{1}{\epsilon}\int_{t - \epsilon}^t{\int_{\T}{\varphi(\zeta(s,\theta))d\theta} - \varphi \left(\int{\zeta(s,\theta)d\theta}\right) ds}.$$

Since $\rho$ is smooth, by Proposition \ref{reg_hydro}, letting $\epsilon$ go to zero yields (\ref{limsup}). (\ref{liminf}) can be obtained in the same way, by using the inequality
$$\frac{1}{N}\operatorname{Ent}_{\mu_N}(f_N(t)) \geq \frac{1}{\epsilon}\int_t^{t + \epsilon}{\frac{1}{N}\Ent_{\mu_N}(f_N(s))ds}.$$

Since the functions $t \rightarrow \frac{1}{N}\operatorname{Ent}_{\mu_N}(f_N(t))$ are continuous and decreasing, and the function $t \rightarrow \int{\varphi(\zeta(t,\theta))d\theta} - \varphi\left(\int{\zeta(t,\theta)d\theta} \right)$ is continuous, Dini's second theorem implies that this pointwise convergence is actually uniform on the compact sets $[\epsilon, T]$, for any $T > \epsilon > 0$.

\subsection{Proof of Proposition \ref{reg_hydro}}

To prove the regularity of the solution of the hydrodynamic equation, we shall use the following interpolation inequality, which is a particular case of a family of inequalities that can be found in the second chapter of [LSU].

\begin{lem} \label{lem_reg}
For any $u \in H^1(\mathbb{T})$ with $\int_{\mathbb{T}}{u d\theta} = 0$ we have
$$||u||_{L^4} \leq 2^{1/4}||u||_{L^2}^{3/4} ||u'||_{L^2}^{1/4}.$$
\end{lem}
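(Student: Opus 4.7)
The plan is to combine a pointwise bound on $u^2$ (obtained from the mean-zero condition and the fundamental theorem of calculus) with the trivial interpolation $\|u\|_{L^4}^4 \le \|u\|_{L^\infty}^2 \|u\|_{L^2}^2$.

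First, since $u \in H^1(\mathbb{T})$, it is continuous on $\mathbb{T}$, and the assumption $\int_{\mathbb{T}} u\,d\theta = 0$ forces the existence of some $\theta_0 \in \mathbb{T}$ at which $u(\theta_0) = 0$. For any $\theta \in \mathbb{T}$, the fundamental theorem of calculus applied to $u^2$ gives
$$u(\theta)^2 = \int_{\theta_0}^{\theta} 2 u(\tau) u'(\tau) \, d\tau,$$
so by Cauchy--Schwarz
$$|u(\theta)|^2 \le 2 \int_{\mathbb{T}} |u(\tau) u'(\tau)| d\tau \le 2 \|u\|_{L^2} \|u'\|_{L^2}.$$
Taking the supremum over $\theta$ yields $\|u\|_{L^\infty}^2 \le 2\|u\|_{L^2} \|u'\|_{L^2}$.

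To finish, bound
$$\|u\|_{L^4}^4 = \int_{\mathbb{T}} u(\theta)^4 \, d\theta \le \|u\|_{L^\infty}^2 \int_{\mathbb{T}} u(\theta)^2 \, d\theta = \|u\|_{L^\infty}^2 \|u\|_{L^2}^2 \le 2 \|u\|_{L^2}^3 \|u'\|_{L^2},$$
and take fourth roots. No step presents a real obstacle; the only subtlety is the use of the mean-zero condition to locate a zero of $u$, which is what allows the boundary term in the integration of $(u^2)'$ to vanish.
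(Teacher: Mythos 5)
Your proof is correct and follows essentially the same route as the paper's: the pointwise bound $|u(\theta)|^2 \le 2\int_{\mathbb{T}}|u|\,|u'|$ (via a zero of $u$ guaranteed by the mean-zero condition), followed by Cauchy--Schwarz and integration against $|u|^2$. You are merely slightly more explicit than the paper about why the boundary term vanishes.
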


\begin{proof}
Let us take such a function $u$. We have
$$|u(\theta)|^4 = |u(\theta)|^2 |u(\theta)|^2  \leq |u(\theta)|^2 \left( \int_{\mathbb{T}}{2 |u(s)| \hspace{1mm} |u'(s)|ds} \right)$$
Using H\"older's inequality, we have
$$\int_{\mathbb{T}}{|u(s)| \hspace{1mm} |u'(s)|ds} \leq ||u||_{L^2} ||u'||_{L^2},$$
so that
$$\int_{\mathbb{T}}{|u(\theta)|^4 d\theta} \leq 2||u||_{L^2}^3 ||u'||_{L^2}$$
and the result immediately follows.
\end{proof}

To prove the regularity of our function, we shall prove bounds on the $L^2$ norms of the derivatives of $\varphi'(\zeta)$, using differential inequalities, and then Sobolev injections. We first have

\begin{align} \label{der1}
\frac{d}{dt} \int{\zeta(t,\theta)^2d\theta} &= 2\int{\zeta \frac{\partial^2}{\partial \theta^2} \varphi'(\zeta)d\theta} \notag \\
&= - 2\int{\left( \frac{\partial \varphi'(\zeta)}{\partial \theta} \right)\frac{\partial \zeta}{\partial \theta}d\theta} \notag \\
&= -2\int{\varphi''(\zeta)\left(\frac{\partial \zeta}{\partial \theta} \right)^2d\theta} \notag \\
&\leq -2(\inf \varphi'')\int{\left(\frac{\partial \zeta}{\partial \theta} \right)^2d\theta}
\end{align}

Integrating this inequality yields

\begin{align}
\int_0^T{\int{\left( \frac{\partial \zeta}{\partial \theta} \right)^2d\theta}dt} &\leq \frac{1}{2(\inf \varphi'')} \left(||\zeta(0,\cdot)||_{L^2}^2 - ||\zeta(T,\cdot)||_{L^2}^2\right) \notag \\
&\leq \frac{1}{2(\inf \varphi'')} ||\zeta(0,\cdot)||_{L^2}^2.
\end{align}
Since $||\zeta(0,\cdot)||_{L^2}^2$ is finite, we obtain
$$\int_0^\infty{\int{\left( \frac{\partial \zeta}{\partial \theta} \right)^2d\theta}dt} < \infty.$$
Moreover, since $\varphi''$ is bounded, we also get the bound on $\partial \varphi'(\zeta)/\partial \theta = \varphi''(\zeta)\partial \zeta/\partial \theta$ : 
\begin{equation} \label{reg1}
\int_0^\infty{\int{\left( \frac{\partial \varphi'(\zeta)}{\partial \theta} \right)^2d\theta}dt} < \infty.
\end{equation}

We then have
\begin{align} 
\frac{1}{2}\frac{d}{dt} \int{\left( \frac{\partial \varphi'(\zeta)}{\partial \theta} \right)^2d\theta} &= \int{ \frac{\partial \varphi'(\zeta)}{\partial \theta} \frac{\partial}{\partial \theta} \left( \varphi''(\zeta)\frac{\partial^2}{\partial \theta^2}\varphi'(\zeta) \right)d\theta} \notag \\
&= - \int{\varphi''(\zeta)\left(\frac{\partial^2}{\partial \theta^2}\varphi'(\zeta) \right)^2d\theta} \notag \\
&\leq -\lambda \int{\left(\frac{\partial^2}{\partial \theta^2}\varphi'(\zeta) \right)^2d\theta} \label{extra} \\
&\leq -\lambda \pi^2 \int{\left(\frac{\partial}{\partial \theta}\varphi'(\zeta) \right)^2d\theta} \label{reg2}
\end{align}
where the last inequality is a consequence of the Poincar\'e inequality $||u||_{L^2} \leq \pi||u'||_{L^2}$ for all functions in $H^1(\mathbb{T})$ with mean zero.

Combining (\ref{reg1}) and (\ref{reg2}), we get for any $t_1 > t_2 > 0$
\begin{equation} \label{reg9}
\int_{\mathbb{T}}{\left( \frac{\partial \varphi'(\zeta(t_2,\theta))}{\partial \theta} \right)^2 d\theta} \leq \frac{C}{t_1} \exp(-2\lambda \pi^2(t_2 - t_1)).
\end{equation}

Moreover, using (\ref{extra}), we get
\begin{equation} \label{reg8}
\int_{\epsilon}^T{\int{\left(\frac{\partial^2}{\partial \theta^2}\varphi'(\zeta) \right)^2d\theta}} \leq \frac{CT}{\epsilon}
\end{equation}
for any $0 < \epsilon < T$.

In the same way, we have 
\begin{align} \label{reg6}
\frac{1}{2}\frac{d}{dt} &\int{\left( \frac{\partial^2 \varphi'(\zeta)}{\partial \theta^2} \right)^2d\theta} = - \int{\left(\frac{\partial^3 \varphi'(\zeta)}{\partial \theta^3}\right) \frac{\partial}{\partial \theta} \left(\varphi''(\zeta) \frac{\partial^2 \varphi'(\zeta)}{\partial \theta^2} \right)} \notag \\
&= -\int{\varphi''(\zeta)\left( \frac{\partial^3 \varphi'(\zeta)}{\partial \theta^3} \right)^2d\theta} - \int{\left( \frac{\partial^3 \varphi'(\zeta)}{\partial \theta^3} \right)\left( \frac{\partial^2 \varphi'(\zeta)}{\partial \theta^2} \right)\left(\frac{\partial \varphi''(\zeta)}{\partial \theta} \right) d\theta} \notag \\
&\leq -\lambda \int{\left( \frac{\partial^3 \varphi'(\zeta)}{\partial \theta^3} \right)^2d\theta} - \int{\left( \frac{\partial^3 \varphi'(\zeta)}{\partial \theta^3} \right)\left( \frac{\partial^2 \varphi'(\zeta)}{\partial \theta^2} \right)\left(\frac{\partial \varphi''(\zeta)}{\partial \theta} \right) d\theta}.
\end{align}

A simple calculation yields
$$\frac{\partial \varphi''(\zeta)}{\partial \theta} = \frac{\varphi^{(3)}(\zeta)}{\varphi''(\zeta)} \frac{\partial \varphi'(\zeta)}{\partial \theta}$$
and our assumption of boundedness on $\varphi^{(3)}/\varphi''$ then yields the bound
$$\left| \int{\left( \frac{\partial^3 \varphi'(\zeta)}{\partial \theta^3} \right)\left( \frac{\partial^2 \varphi'(\zeta)}{\partial \theta^2} \right)\left(\frac{\partial \varphi''(\zeta)}{\partial \theta} \right) d\theta} \right|$$

$$\leq C\int{\left| \left(\frac{\partial^3 \varphi'(\zeta)}{\partial \theta^3} \right)\left( \frac{\partial^2 \varphi'(\zeta)}{\partial \theta^2} \right)\left(\frac{\partial \varphi'(\zeta)}{\partial \theta} \right)\right| d\theta}.$$

Using H\"older's inequality, we then have
\begin{align} \label{reg3}
&\left| \int{\left( \frac{\partial^3 \varphi'(\zeta)}{\partial \theta^3} \right)\left( \frac{\partial^2 \varphi'(\zeta)}{\partial \theta^2} \right)\left(\frac{\partial \varphi''(\zeta)}{\partial \theta} \right) d\theta} \right| \notag \\
&\leq \left( \int{\left(\frac{\partial^3 \varphi'(\zeta)}{\partial \theta^3} \right)^2d\theta} \right)^{1/2} \left( \int{\left(\frac{\partial^2 \varphi'(\zeta)}{\partial \theta^2} \right)^4d\theta} \right)^{1/4} \left( \int{\left(\frac{\partial \varphi'(\zeta)}{\partial \theta} \right)^4d\theta} \right)^{1/4}
\end{align}

By an application of Lemma \ref{lem_reg}, we have

\begin{equation} \label{reg4}
\left( \int{\left(\frac{\partial^2 \varphi'(\zeta)}{\partial \theta^2} \right)^4d\theta} \right)^{1/4} \leq C\left( \int{\left(\frac{\partial^2 \varphi'(\zeta)}{\partial \theta^2} \right)^2d\theta} \right)^{3/8} \left( \int{\left(\frac{\partial^3 \varphi'(\zeta)}{\partial \theta^3} \right)^2d\theta} \right)^{1/8}
\end{equation}

and 

\begin{equation} \label{reg5}
\left( \int{\left(\frac{\partial \varphi'(\zeta)}{\partial \theta} \right)^4d\theta} \right)^{1/4} \leq C\left( \int{\left(\frac{\partial \varphi'(\zeta)}{\partial \theta^2} \right)d\theta} \right)^{3/8} \left( \int{\left(\frac{\partial^2 \varphi'(\zeta)}{\partial \theta^2} \right)^2d\theta} \right)^{1/8}.
\end{equation}

Plugging (\ref{reg4}) and (\ref{reg5}) into (\ref{reg3}), we get

\begin{align} 
&\left| \int{\left( \frac{\partial^3 \varphi'(\zeta)}{\partial \theta^3} \right)\left( \frac{\partial^2 \varphi'(\zeta)}{\partial \theta^2} \right)\left(\frac{\partial \varphi''(\zeta)}{\partial \theta} \right) d\theta} \right| \notag \\
&\leq \left( \int{\left(\frac{\partial^3 \varphi'(\zeta)}{\partial \theta^3} \right)^2d\theta} \right)^{5/8} \left( \int{\left(\frac{\partial^2 \varphi'(\zeta)}{\partial \theta^2} \right)^2 d\theta} \right)^{1/2} \left( \int{\left(\frac{\partial \varphi'(\zeta)}{\partial \theta} \right)^2 d\theta} \right)^{3/8}.
\end{align}
Using the classical interpolation inequality $||u'||_{L^2}^2 \leq ||u||_{L^2}||u''||_{L^2}$, we get
$$\left( \int{\left(\frac{\partial^2 \varphi'(\zeta)}{\partial \theta^2} \right)^2 d\theta} \right)^{1/2} \leq \left( \int{\left(\frac{\partial^3 \varphi'(\zeta)}{\partial \theta^3} \right)^2d\theta} \right)^{1/4} \left( \int{\left(\frac{\partial \varphi'(\zeta)}{\partial \theta} \right)^2 d\theta} \right)^{1/4}$$

and therefore

\begin{align} 
&\left| \int \left( \frac{\partial^3 \varphi'(\zeta)}{\partial \theta^3} \right)\left( \frac{\partial^2 \varphi'(\zeta)}{\partial \theta^2} \right)\left(\frac{\partial \varphi''(\zeta)}{\partial \theta} \right) d\theta \right| \notag \\
& \hspace{3cm} \leq \left( \int{\left(\frac{\partial^3 \varphi'(\zeta)}{\partial \theta^3} \right)^2d\theta} \right)^{7/8} \left( \int{\left(\frac{\partial \varphi'(\zeta)}{\partial \theta} \right)^2 d\theta} \right)^{5/8}.
\end{align}

Finally, using Young's inequality $ab \leq 7a^{8/7}/8 + b^8/8$, we get for any $\delta > 0$
\begin{align} 
&\left| \int{\left( \frac{\partial^3 \varphi'(\zeta)}{\partial \theta^3} \right)\left( \frac{\partial^2 \varphi'(\zeta)}{\partial \theta^2} \right)\left(\frac{\partial \varphi''(\zeta)}{\partial \theta} \right) d\theta} \right| \notag \\
& \hspace{15mm} \leq C \delta^{8/7} \left( \int{\left(\frac{\partial^3 \varphi'(\zeta)}{\partial \theta^3} \right)^2d\theta} \right) + \frac{C}{\delta^8} \left( \int{\left(\frac{\partial \varphi'(\zeta)}{\partial \theta} \right)^2 d\theta} \right)^5.
\end{align}

Taking $\delta$ small enough and inserting this inequality into (\ref{reg6}), we get
\begin{align} \label{reg7}
\frac{1}{2}\frac{d}{dt} &\int{\left( \frac{\partial^2 \varphi'(\zeta)}{\partial \theta^2} \right)^2d\theta} \notag \\
&\leq -\frac{\lambda}{2} \int{\left( \frac{\partial^3 \varphi'(\zeta)}{\partial \theta^3} \right)^2d\theta} + C\left( \int{\left(\frac{\partial \varphi'(\zeta)}{\partial \theta} \right)^2 d\theta} \right)^5 \notag \\
&\leq -\frac{\lambda}{2\pi^2} \int{\left( \frac{\partial^2 \varphi'(\zeta)}{\partial \theta^2} \right)^2d\theta} + C\left( \int{\left(\frac{\partial \varphi'(\zeta)}{\partial \theta} \right)^2 d\theta} \right)^5.
\end{align}

Combining (\ref{reg9}), (\ref{reg8}) and (\ref{reg7}), it is easy to see that $\int{\left( \frac{\partial^2 \varphi'(\zeta)}{\partial \theta^2} \right)^2d\theta}$ is uniformly bounded for $t$ in $[\epsilon, T]$, for all $T > \epsilon > 0$. Since we can inject $H^2(\mathbb{T})$ into $C^{1+\alpha}(\mathbb{T})$ for some $\alpha > 0$, $\varphi'(\zeta(t,\cdot))$ lies in $C^{1+\alpha}(\mathbb{T})$ for all $t$ in $[\epsilon, T]$. Since $\varphi'$ is invertible and $\varphi''$ is positive, this implies that $\zeta(t,\cdot)$ also lies in $C^{1+\alpha}(\mathbb{T})$ for all $t$ in $[\epsilon, T]$. Using this fact, we can rewrite the PDE as
$$\frac{\partial \zeta}{\partial t} = \varphi''(\zeta)\frac{\partial^2 \zeta}{\partial \theta^2} + \varphi^{(3)}(\zeta) \left(\frac{\partial \zeta}{\partial \theta} \right)^2.$$

Taking $a(t,\theta) = \varphi''(\zeta(t,\theta))$ and $b(t,\theta) = \varphi^{(3)}(\zeta) \left(\frac{\partial \zeta}{\partial \theta} \right)$, we get that $\zeta$ is a solution of the linear parabolic PDE
$$\frac{\partial \zeta}{\partial t} = a(t,\theta)\frac{\partial^2 \zeta}{\partial \theta^2} + b(t,\theta) \frac{\partial \zeta}{\partial \theta}$$
with coefficients $a$ and $b$ that belong to $C^{\alpha}$. We can then use the theory for regularity of the solutions of linear parabolic equations (see for example [LSU]) to show that $\zeta(t,\cdot)$ lies in $C^{2+\alpha}(\mathbb{T})$ for all $t$ in $[\epsilon, T]$. The fact that $\frac{\partial \zeta}{\partial t}$ lies in $C^{\alpha}$ for all $t$ in $[\epsilon, T]$ immediately follows from the PDE.

\vspace{1cm}

{\Large \textbf{Acknowledgments}}

I would like to thank C\'edric Villani for having started me on this topic and for his tremendous help, as well as S.R.S Varadhan for his help, and Maria G. Westdickenberg for her advice and proofreading. I would also like to thank the (anonymous) referee, who sugggested using inequality (\ref{referee}) to prove Theorem \ref{77}, rather than the relative entropy method, which greatly simplifies the proof.

\vspace{1cm}

{\Large \textbf{Bibliography}}
{\small
\begin{itemize}

\item
\label{Go}[Go]
	Gozlan, N.,
	A Characterization of Dimension-Free Concentration in Terms of Transportation Inequalities
	\textit{Ann. Probab.} \textbf{37}, Number 6 (2009), 2480-2498.

\item
\label{GOVW}[GOVW]
	Grunewald, N., Otto, F., Villani, C. and Westdickenberg, M. G.,
	A two-scale approach to logarithmic Sobolev inequalities and the hydrodynamic limit.
	\textit{Ann. Inst. H. Poincar\'e Probab. Statist}. 
	45 (2009), 2, 302--351.

\item
\label{GPV}[GPV]
	Guo, M.Z., Papanicolau, G.C. and Varadhan, S.R.S.
	Nonlinear Diffusion Limit for a System with Nearest Neighbor Interactions,
	\textit{Commun. Math. Phys.} 118, 31-59 (1988)

\item
\label{K}[K]
	Kosygina, E.,
	The Behavior of the Specific Entropy in the Hydrodynamic Scaling Limit for the Ginzburg-Landau Model,
	\textit{Markov Processes and Related Fields}, 
	\textbf{7}, 3 (2001), pp. 383-417.

\item
\label{L}[L]
	Ledoux, M.,
	The Concentration of Measure Phenomenon,
	AMS, Math. Surveys and Monographs, \textbf{89}, Providence, Rhode Island, 2001.
	
\item
\label{LSU}[LSU]
	Ladyzenskaja, O. A., Solonnikov, V. A. and Uralceva, N. N., 
	Linear and Quasilinear Equations of Parabolic Type.
	Translations of Mathematical Monographs, Volume 23, AMS, 1968.
	
\item
\label{MO}[MO]
	Otto, F. and Menz, G.,
	Uniform logarithmic Sobolev inequalities for conservative spin systems with super-quadratic single-site potential.
	To appear in \textit{Ann. Probab.} (2011)
	
\item
\label{OV}[OV]
	Otto, F. and Villani, C.,
	Generalization of an Inequality by Talagrand and Links with the Logarithmic Sobolev Inequality,
	\textit{J. Funct. Analysis}, \textbf{243} (2007), pp. 121-157.

\item
\label{Y}[Y]
	Yau, H.T.,
	Relative Entropy and Hydrodynamics of Ginzburg-Landau Models, 
	\textit{Lett. Math. Phys.}, \textbf{22} (1991), 63-80.
\end{itemize}
}

\end{document}